\documentclass[english,reqno]{amsart}
\usepackage[margin=1in]{geometry}

\usepackage{preamble}

\begin{document}

\setlength{\abovedisplayskip}{10pt}
\setlength{\belowdisplayskip}{10pt}

\title{Uniqueness of Solutions to a Gas-Disk Interaction System}

\author{Anton Iatcenko}

\address{Department of Mathematics, Simon Fraser University, 8888 University Dr., Burnaby, BC V5A 1S6, Canada}
\email{anton\_iatcenko@sfu.ca}

\author{Weiran Sun}
\address{Department of Mathematics, Simon Fraser University, 8888 University Dr., Burnaby, BC V5A 1S6, Canada}
\email{weiran\_sun@sfu.ca}

%\date{\today}
\date{}
\begin{abstract}
  In this paper we give an elementary proof of uniqueness of solutions to a gas-disk interaction system with diffusive
  boundary condition. Existence of near-equilibrium solutions for this type of systems with various boundary conditions has
  been extensively studied in \cite{ACMP2008, CS2014, CS2015, CMP2006, CCM2007, SR2014, K2018, C2007, K2018-2}. However, the
  uniqueness has been an open problem, even for solutions near equilibrium. Our work gives the first rigorous proof of the
  uniqueness among solutions that are only required to be locally Lipschitz; in particular, it holds for solutions far from
  equilibrium states. \vspace{2mm}
  
  \noindent
  \hspace{-5mm}\textbf{Keywords:} kinetic equations, integro-differential equations, uniqueness, gas-body
   interaction, friction. 
\end{abstract}

\subjclass[2010]{35A02, 35Q70, 35Q82}
\maketitle

\section{Introduction}
The main goal of this paper is to show uniqueness of solutions to a gas-disk interaction system. This system describes the motion of a disk immersed in a collisionless gas. Among many ways to model the friction between the gas and the disk, the simplest one is to assume that the friction is proportional to the velocity of the disk. In this scenario the velocity of the disk can be found by solving a linear ODE. Here we consider a more realistic model as in \cite{ACMP2008, CS2014, CS2015, CMP2006, CCM2007, SR2014, K2018, C2007, K2018-2, TA2012, TA2013, TA2014}, where the evolution of the gas and the disk satisfies a coupled system of integro-differential equations. The coupling is through collisions of gas particles with the disk: these collisions produce a drag force on the disk through momentum exchange and provide a boundary condition for the gas. 

In this paper we make a simplifying assumption that the disk is infinite. Together with assumed symmetry this lets us reduce
the whole system to one dimension, thus making the disk a single point moving along the horizontal axis. To specify the model we
let $f(x, v, t)$ be the density function of the gas that evolves according to the free transport equation away from the disk: 
\begin{align} \label{eq:gas}
\del_t f + v \, \del_x f = 0 ,  \qquad   f(x, v, 0) = \phi_0(v) ,
\end{align}
where $(x, v, t) \in \R \times \R \times \R^+$ are position, velocity, and time respectively. Denote the position of the
disk at time $t$ by $\eta(t)$ and its velocity by $p(t)$. The interaction of the gas with the disk is described by a diffusive
boundary condition: %\vspace{-5mm}
\begin{align}
f^+_R(\eta(t), v, t) &= 2 e^{-(v - p(t))^2} \int_{-\infty}^{p(t)} (p(t) - w) f^-_R(\eta(t), w, t) \dw, 
\quad v > p(t), \label{BC:R} \\
f^+_L(\eta(t), v, t) &= 2 e^{-(p(t) - v)^2} \int^{ \infty}_{p(t)} \, \ (w - p(t)) f^-_L(\eta(t), w, t) \dw, 
\quad v < p(t), \label{BC:L}
\end{align}
where the sub-indices $R$ and $L$ denote the right and left sides of the disk. Throughout the paper superscripts $+$ and $-$ 
on the density functions denote the postcollisional and precollisional distributions respectively, understood as one-sided
 limits:
\begin{align} \label{def:f_pm}
f^\pm(\eta(t), v, t) = \lim_{\Eps \to 0^+} f^\pm(\Eta(t)\pm \Eps v, v, t \pm \Eps) .
\end{align}

%\newpage
The diffusive boundary conditions~\eqref{BC:R}-\eqref{BC:L} essentially state that shape of the outgoing distribution is always
Gaussian, with coefficients chosen to ensure the conservation of mass. Therefore, our model considers the case where collisions
are instantaneous and the disk does not capture any finite mass of the gas via the collision process.

We assume that the disk is acted on by an external force $F(x, t)$ and the drag force $G_p(t)$ generated through collisions with
the gas particles (we have associated the drag force with the sub-index $p$ to emphasize its dependence on the disk velocity
$p$).  Then the motion of the disk is described by
\begin{alignat}{2} 
\dot    p &= F(\eta(t), t) - G_p(t),  \qquad &&p(0) = p_0,  \label{eq:disk} \\
\dot \eta &= p(t),                        &&\eta(0) = 0. \label{eq:disk-eta}
\end{alignat}
We will write the total drag force as a combination of the drag forces due to particles colliding with
the disk from the right and left: \vspace{-5mm}
\begin{align} \label{def:drag-full}
G_p(t) = G_{p, R}(t) - G_{p, L}(t).
\end{align}
The signs are chosen to make both components of the drag positive. Physically speaking, $G_{p, L}$ accelerates the disk
and $G_{p, R}$ decelerates it. Their exact expressions are derived from Newton's Second Law (see \cite{CS2014} for details):
\vspace{-5mm}
\begin{align} 
G_{p, R}(t) &:= \int^{p(t)}_{-\infty} (p(t) - v)^2 f^-_R(t, \Eta(t), v) \dv + 
\int^\infty_{p(t)} (v - p(t))^2 f^+_R(t, \Eta(t), v) \dv \,, \label{def:drag-R}\\
G_{p, L}(t) &:= \int^{p(t)}_{-\infty} (p(t) - v)^2 f^+_L(t, \Eta(t), v) \dv + 
\int^\infty_{p(t)} (v - p(t))^2 f^-_L(t, \Eta(t), v) \dv \,. \label{def:drag-L}
\end{align}
The evolution of the complete gas-disk system is governed by equations~\eqref{eq:gas}-\eqref{def:drag-L}.
We comment that the derivation of~\eqref{def:drag-R}-\eqref{def:drag-L} relies on the Reynolds transport theorem, 
which assumes that the exchange of momentum between the gas and the disk can only happen through the fluxes of the gas 
moving into and out of the disk. Hence any particle that stays on the disk does not contribute to the momentum exchange 
or the drag force. We also note that to have an interaction with the disk the particle to the right (left) of it must be moving 
slower (faster) than the disk. 

Gas-body coupled systems have been extensively studied both numerically and analytically with pure diffusive, specular, and 
more generally, the Maxwell boundary conditions (\cite{ACMP2008, TA2012, TA2013, TA2014, CS2014, CS2015, CMP2006,
CCM2007, SR2014, K2018, C2007, K2018-2}). We refer the reader to a recent paper \cite{K2018} for a comprehensive list of
references. Among the central questions for these systems are their well-posedness and long-time behaviour. Regarding 
the long-time asymptotics, it is now fairly well-understood that due to the effect of re-collisions, the relaxation of the disks
velocity toward its equilibrium state may not be exponential as in the simplified model where re-collisions are ignored. In
fact, one may obtain algebraic decay rates \cite{ACMP2008, TA2012, TA2013, TA2014, CS2014, CS2015, CMP2006, CCM2007, SR2014,
K2018, C2007, K2018-2}. Moreover, depending on the shape of the body, such rates may or may not depend on the spatial dimension
\cite{C2007,SR2014}. 

The well-posedness issue, however, is less understood. To the best of our knowledge, existence of solutions has only
been investigated for data near equilibrium states \cite{ACMP2008, CS2014, CS2015, CMP2006, CCM2007, SR2014, K2018, C2007,
K2018-2} and uniqueness has been an open question even for these solutions. It is our goal in this paper to give 
a uniqueness proof for solutions to~\eqref{eq:gas}-\eqref{def:drag-L}, where the disk velocity $p$ only needs to be in the
natural space of locally Lipschitz functions. This includes solution spaces considered in~\cite{ACMP2008, CS2014}, as well as 
more general cases with solutions far from an equilibrium. 
%In fact, the systems we are considering may not have any 
%equilibrium state at all since we are allowing for a fairly general external forcing. 
The main result of this paper is

\begin{thm} \label{thm:main}
Suppose the initial density $\phi_0 \in L^1 \cap L^\infty (\R)$ and the external force $F(x, t)$ is Lipschitz in $x$ 
with its Lipschitz coefficient independent of $t$. Then for any $p_0 \in \R$ there exists at most one solution $(\eta, p, f)$ to the system
\eqref{eq:gas}-\eqref{def:drag-L} such that $p$ is locally Lipschitz.
\end{thm}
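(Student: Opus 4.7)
The plan is to reduce the system to a closed integral/ODE system for four scalar functions of time and then obtain uniqueness by a Gronwall estimate on the difference of two solutions. Since the initial density $\phi_0$ depends only on $v$ and free transport preserves spatially constant profiles, any particle whose backward characteristic $s \mapsto (\eta(t) + v(s-t), v)$ does not meet the disk in $[0, t]$ still carries $f = \phi_0(v)$ at time $t$. Combined with the diffusive boundary conditions \eqref{BC:R}--\eqref{BC:L}, the outgoing trace on each side is a Gaussian with a single time-dependent amplitude: $f_R^+(\eta(t), v, t) = \alpha_R(t)\, e^{-(v - p(t))^2}$ for $v > p(t)$, and analogously $f_L^+(\eta(t), v, t) = \alpha_L(t)\, e^{-(p(t) - v)^2}$ for $v < p(t)$. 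Hence the entire solution is encoded by the quadruple $(p, \eta, \alpha_R, \alpha_L)$, and \eqref{eq:disk}--\eqref{eq:disk-eta} already give two equations for it.

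To obtain equations for $\alpha_R, \alpha_L$, I would use characteristics: $f_R^-(\eta(t), w, t)$ equals either $\phi_0(w)$ (the backward ray reaches $t = 0$ without meeting the disk) or $\alpha_R(s)\, e^{-(w - p(s))^2}$, where $s < t$ is the most recent collision time of the ray, determined implicitly by $w = (\eta(t) - \eta(s))/(t - s)$. Parametrizing the re-collided contribution by $s$, with Jacobian $|\partial w/\partial s| = |\eta(t) - \eta(s) - p(s)(t-s)|/(t-s)^2$, both \eqref{BC:R} and \eqref{def:drag-R} become Volterra-type expressions whose kernels depend on $(p, \eta)$; the left-side equations are analogous. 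This yields a closed integral/ODE system for $(p, \eta, \alpha_R, \alpha_L)$, after which recovering $f$ at every $(x, v, t)$ is a pointwise operation along characteristics.

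Given two solutions $(p_i, \eta_i, \alpha_{R,i}, \alpha_{L,i})$, $i = 1, 2$, with the same initial data, I would work on a short interval $[0, T_0]$ on which both $p_i$ share a common Lipschitz constant $L$ (available from the local Lipschitz hypothesis) and set
\begin{equation*}
  N(t) := |p_1 - p_2|(t) + |\eta_1 - \eta_2|(t) + |\alpha_{R,1} - \alpha_{R,2}|(t) + |\alpha_{L,1} - \alpha_{L,2}|(t).
\end{equation*}
The goal is $N(t) \le C \int_0^t N(s)\, ds$, after which Gronwall forces $N \equiv 0$ on $[0, T_0]$; iterating on successive intervals extends uniqueness to all of $\R^+$. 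Each of the four differences is written as a time integral using the ODEs and the Volterra equations, and each integrand splits into an uncollided piece (bounded using $\phi_0 \in L^1 \cap L^\infty$ together with the Lipschitz bound on $F$) and a re-collided piece (bounded using the explicit Gaussian factor and the boundedness of $\alpha_{R, L}$, which follows from the transport identity and the boundary conditions).

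The hardest part will be the re-collided piece. The sets of previously colliding particles differ between the two solutions, the implicit map $s \mapsto w(s; \eta)$ is different in each case, and the Jacobian $|\partial w/\partial s|$ is nominally singular as $s \to t^-$. The singularity is tamed by the identity $\eta(t) - \eta(s) - p(s)(t - s) = \int_s^t (p(u) - p(s))\, du = O((t-s)^2)$ for Lipschitz $p$, showing that the kernel is in fact bounded, and analogously $w(s) - p(s) = O(t-s)$, which controls the exponential factor. Estimating the \emph{difference} of the two kernels requires a stability statement for the implicit map under perturbations of $\eta$ with constants depending on $L$, together with a careful treatment of the symmetric difference of the sets of re-colliding particles, which can be bounded by $\sup_{s \le t}|\eta_1 - \eta_2|(s)$. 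Once these geometric estimates are in hand, the Gronwall inequality closes and uniqueness follows.
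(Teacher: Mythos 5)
Your overall strategy --- trading the velocity variable for the time of the most recent collision, recasting the boundary condition and the drag as Volterra-type expressions in that time variable, and closing with a Gronwall estimate --- is the same as the paper's, and your resummation into two amplitudes $\alpha_R(t),\alpha_L(t)$ is a clean repackaging of the paper's decomposition $\frec=\sum_n f_n$ (up to a factor of $2$, your $\alpha_R(t)$ is the sum of the paper's fluxes $j_n(t)$, which would spare you the induction on the collision number). The genuine gap sits exactly where you defer to ``a stability statement for the implicit map.'' The map $s\mapsto w(s;t)=(\eta(t)-\eta(s))/(t-s)=\vint{p}_{s,t}$ is neither injective nor monotone, so ``the most recent collision time determined implicitly by $w$'' is not well defined without invoking the non-penetration constraint \ref{a0}, and even then grazing trajectories leave the correspondence non-unique. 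The paper resolves this with a rising-sun construction: it replaces $v(s,t)$ by its monotone lower envelope $\vm$, characterizes the genuine precollision times as $\Nt=\{s: v(s,t)=\vm\}$, restricts to $\Phi_t=\{\partial_s\vm>0\}$ to obtain a strictly increasing bijection onto almost all precollisional velocities, and shows via the Lusin property that the excluded grazing velocities have measure zero. None of this appears in your outline, and it is the main technical content of the result.

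More importantly, the stability estimate you would need does not hold in the form you propose. For a velocity $w$ that recollides in both solutions, the corresponding collision times $s_1(w)$ and $s_2(w)$ are \emph{not} controlled by $\sup_s|\eta_1-\eta_2|(s)$ or by $\|p_1-p_2\|$, because the inverse of $s\mapsto v(s,t)$ fails to be Lipschitz wherever $\partial_s v$ vanishes --- precisely the grazing regime; equivalently, the difference of Jacobians $\partial_s\vmp-\partial_s\vmq$ admits no pointwise bound in terms of $\|p-q\|$. The paper circumvents this by integrating by parts in $s$ so that only the difference $\vmp-\vmq$ appears, and that difference \emph{is} bounded by $\|p-q\|$ (Lemma~\ref{le:Lip_p}); see the terms $I_2$ and $J_1$ in Propositions~\ref{prop:bound-f-n-s} and~\ref{prop:Lip-f-rec}. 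Carrying out that integration by parts in turn requires a Lipschitz-in-$s$ bound on the recollided density along the collision-time parametrization (in your formulation, on $\alpha_R$ and on the composed kernel), which the paper must establish first. Without this device, or an equivalent replacement for the derivative of the time--velocity correspondence, your Gronwall inequality does not close.
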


Our main step in proving the main theorem is to show that the drag force due to recollisions, denoted by $G_\text{rec}$, is Lipschitz in the velocity $p$
(Proposition~\ref{prop:Lip-G}). The main difficulty for such estimate is the dependence the distribution of the recolliding
particles on the entire history of the disk motion. We address this issue by taking advantage of the inherently recursive
nature of the problem: the distribution of the particles colliding with the disk for the $n^\text{th}$ time at time $t$ is
determined by the distribution of the particles colliding with the disk for the $(n-1)^\text{th}$ time at some earlier time $s$.
Instead of trying to compute or estimate such $s$ for a given velocity $v$, we use a change of variables $v = v(s, t)$. This
allows us to compare the particles that have collided with the disk at the same time in the past instead of comparing particles
that have the same velocity at the current time. 

Three remarks are in order: first, we have assumed that the initial state of the gas is spatially homogeneous. This
assumption can be dropped at the cost of adding more technicalities. Second, due to the essential step of
change of variables, so far our technique is only applicable to the case with diffusive boundary conditions. Hence for systems
with specular or Maxwell boundary conditions uniqueness is still an open question. Third, this paper only deals with the 
one-dimensional case with a collisionless gas, but we expect a similar strategy to be applicable in higher dimensions and for
systems with simple collisions such as the special Lorenz gas in \cite{TA2012}. This will be subject to future investigation. 

The rest of the paper is laid out as follows: in Section~\ref{sec:Assump} we state our assumptions, introduce partition of the 
density function and the change of variables, and reformulate the density function and the drag force into recursive forms. 
Section~\ref{sec:Assump} contains the essential ideas and constructions that will be used in various estimates and the uniqueness proof in the later part. In Section~\ref{sec:prelim_bnds} we obtain
preliminary bounds on the density function using the recursive form. Finally, in Section~\ref{sec:uniq} we establish the
Lipschitz property of the drag force and give a proof of the uniqueness theorem. 
 
%\newpage
\section{Assumptions and Reformulations} \label{sec:Assump}
In this section we state all the assumptions used to prove the uniqueness of the solution. We also introduce several reformulations
of the density function $f$ as well as the drag term $G$. Most of the discussion here is built upon the understanding of the physics 
underlying the interactions of the gas particles with the disk. 

Throughout this paper we let $T$ be a fixed arbitrary time.  

\vspace{-2mm}

\subsection{Main Assumptions} The assumptions on the system are 
\begin{enumerate}[label=\bf{(A\arabic*)}]
\setcounter{enumi}{-1}

\item  \label{a0} 
Particles cannot penetrate the disk. \smallskip

\item  \label{a1}
\textit{Assumptions on the gas:} the initial distribution $\phi_0 = \phi_0(v)$ satisfies

\begin{enumerate}

\item $\phi_0 \in L^\8 (\R)$;

\item The zeroth, first and second moments of $\phi_0$ are finite:
\begin{align}
\int_\R (1 + v^2)\phi_0(v) \dv < \8.
\end{align}

\end{enumerate}

\item \label{a2}
\textit{Assumption on the disk:} velocity of the disk is 
locally Lipschitz with  
\begin{align} \label{const:Lip}
\| p \|_{L^\8(0, T)} + \| \dot p \|_{L^\8(0, T)} \leq M, \qquad  \end{align}
where $M$ may depend on $T$. 
\end{enumerate}

\subsection{Reformulation of the Model} \label{subsec:reform}
For the rest of the paper we will only consider the gas to the right of the disk since the analysis for the gas 
to the left of the disk is analogous. This lets us drop the sub-indices $R$ and $L$ in \eqref{BC:R}-\eqref{BC:L}
and~\eqref{def:drag-L}-\eqref{def:drag-R}.

We begin by simplifying the expression for the drag forces. The expression for the outgoing density in \eqref{BC:R} 
allows us to write
\begin{align*}
\int^\8_{p(t)} (v - p(t))^2 f^+(\Eta(t), v, t) \dv
&= 2 \vpran{\int_{-\8}^{p(t)} (p(t) - v) f^-(\eta(t), v, t) \dv} \int^\8_{p(t)} (v - p(t))^2 e^{-(v - p(t))^2} \dv \\
&= \frac{\sqrt{\pi}}{2} \int_{-\8}^{p(t)} (p(t) - v) f^-(\eta(t), v, t) \dv,
\end{align*}
so the expression for the drag force can be written as 
\begin{align} \label{def:drag}
G_p(t) &= \int^{p(t)}_{-\8} \vpran{(p(t) - v)^2 + \frac{\sqrt{\pi}}{2} (p(t) - v)} f^-(\eta(t), v, t) \dv .  
\end{align}

%\newpage
\subsubsection{Partition of the density function} \label{sec:gas_decomp}
To make the drag term more amiable to analysis, we introduce the idea of recursive scattering: for $x\neq \eta(t)$ 
let $f_n(x, v, t)$ be the density functions of the particles that have collided with the disk exactly $n$ times in the
past. Away from the disk they satisfy the same free transport equation as $f$. For $x = \Eta(t)$ we define $f_n^\pm(x, v, t)$ 
in terms of the one-sided limits similar to those in \eqref{def:f_pm}:
\begin{align} \label{def:fn_pm}
f_n^\pm(\eta(t), v, t) = \lim_{\Eps \to 0^+} f_n^{\pm}(\Eta(t)\pm \Eps v, v, t \pm \Eps).
\end{align}
The boundary conditions on $f_n$'s are similar to 
those for the full density function $f$, with the exception that the collision with the disk now increments the sub-index.
In particular, for $v > p(t)$ and $n \geq 0$ we write
\begin{align} \label{def:fn_v}
f_{n+1}^+(\eta(t), w, t) &= 2 e^{-(w - p(t))^2}  \int_{-\8}^{p(t)}\ (p(t) - v) f_{n}^-(\eta(t), v, t) \dv .
\end{align} 
We also define \frec to be the density function of the particles that have collided with the disk in the past:
\begin{gather} \label{def:frec}
\frec(x, v, t) = \sum_{n \geq 1} f_n(x, v, t) \,.
\end{gather}
Thus the full density function is decomposed as 
\begin{align*}
   f(x, v, t) = \phi_0(v) + \frec(x, v, t) \,.
\end{align*} 
Similarly, we define 
$G_{p, \text{rec}}$ to be the drag forces due to particles that have collided with the disk in the past:
\begin{align} \label{def:Grec}
G_{p, \text{rec}}(t) &= 
\int^{p(t)}_{-\8} \vpran{(p(t) - v)^2 + \frac{\sqrt{\pi}}{2} (p(t) - v)} \frec(\eta(t), v, t) \dv .
\end{align}
%and thus write the full drag term as $G_p = G_{p, 0} + G_{p, \text{rec}}$.
%\red{why do we still have $G_{p, 0}$ here?}

\subsubsection{Average Velocity}
We now address the possibility for the particles to collide with the disk multiple times. Throughout the paper we adopt
the following notation for the average velocity of the disk on the time interval $[s, t]$: \vspace{-5mm}
\begin{align*} 
   \vint{p}_{s, t} = \frac{1}{t-s} \int_s^t p(\tau) \dtau .
\end{align*}
It will play a significant role in the precollision conditions and the change of variables. We summarize a few useful
properties of the average velocity in the following lemma:

%\newpage
\begin{lem} \label{lem:technical} 
Suppose $t \in (0, T)$ and $s \in (0, t)$. Let $v(\cdot, \cdot)$ be the function defined as
\begin{gather} \label{def:preco-time}
v(s, t) = \vint{p}_{s, t}.
\end{gather}
Let $M$ be the Lipschitz constant in~\eqref{const:Lip}. Then
\begin{enumerate}[label=(\alph*), leftmargin=*]

\item \label{prop:vst_Linf}
$v(s, t)$ satisfies the bound $|v(s, t)| \leq M$; \smallskip

\item \label{prop:vst_deriv}
the derivatives of $v(s, t)$ are
\begin{align} \label{formula:deriv-v}
   \dds{v} = \frac{v - p(s)}{t-s}  ,  \qquad
   \dds[t]{v}  = \frac{p(t) - v}{t-s} ;
\end{align}

\item \label{prop:vst_deriv_bnd}
the derivatives of $v$ satisfy the following estimates:
\begin{align} \label{bnd:dv}
   \abs{\dds{v}} \leq \frac{1}{2}M , \qquad
   \abs{\dds[t]{v}} \leq \frac{1}{2}M . 
\end{align}
\end{enumerate}
\end{lem}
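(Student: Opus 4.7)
The plan is to treat the three parts in order, using only the definition of $v(s,t)$ together with the bounds on $p$ and $\dot p$ furnished by assumption~\ref{a2}.

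For part~\ref{prop:vst_Linf}, I would simply estimate the integral pointwise. Writing
\[
|v(s,t)| = \left|\frac{1}{t-s}\int_s^t p(\tau)\,d\tau\right| \leq \frac{1}{t-s}\int_s^t |p(\tau)|\,d\tau \leq \|p\|_{L^\infty(0,T)} \leq M,
\]
which gives the claim immediately.

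For part~\ref{prop:vst_deriv}, the plan is to write $v(s,t) = F(s,t)/(t-s)$ with $F(s,t) := \int_s^t p(\tau)\,d\tau$, so that $\partial_s F = -p(s)$ and $\partial_t F = p(t)$ by the fundamental theorem of calculus (applicable because $p$ is continuous, being Lipschitz). The quotient rule then yields
\[
\partial_s v = \frac{-p(s)(t-s) + F(s,t)}{(t-s)^2} = \frac{v(s,t) - p(s)}{t-s}, \qquad
\partial_t v = \frac{p(t)(t-s) - F(s,t)}{(t-s)^2} = \frac{p(t) - v(s,t)}{t-s},
\]
which are exactly the formulas in~\eqref{formula:deriv-v}.

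For part~\ref{prop:vst_deriv_bnd}, the idea is to exploit the Lipschitz bound $\|\dot p\|_{L^\infty(0,T)} \leq M$. Rewriting the numerator of $\partial_s v$ as an average of differences,
\[
v(s,t) - p(s) = \frac{1}{t-s}\int_s^t \bigl(p(\tau) - p(s)\bigr)\,d\tau,
\]
and using $|p(\tau)-p(s)| \leq M(\tau - s)$, I would estimate
\[
|v(s,t) - p(s)| \leq \frac{M}{t-s}\int_s^t (\tau - s)\,d\tau = \frac{M(t-s)}{2},
\]
so that $|\partial_s v| \leq M/2$. The bound on $\partial_t v$ follows from the symmetric identity $p(t) - v(s,t) = \frac{1}{t-s}\int_s^t (p(t)-p(\tau))\,d\tau$ by the same argument. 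The whole lemma is essentially an exercise in the mean value theorem, so I do not anticipate any real obstacle; the only subtlety is making sure the formulas in~\ref{prop:vst_deriv} remain well-defined up to $s=0$ and down to $s \to t$, but the Lipschitz regularity of $p$ ensures both hold (the latter only as a limiting statement, which is not required by the lemma).
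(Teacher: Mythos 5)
Your proposal is correct and follows essentially the same route as the paper: part (a) by the pointwise bound $\|p\|_{L^\infty}\leq M$, part (b) by direct differentiation of the quotient, and part (c) by writing the numerator as an average of differences $p(\tau)-p(s)$ (resp.\ $p(t)-p(\tau)$) and applying the Lipschitz bound $\|\dot p\|_{L^\infty}\leq M$. No gaps.
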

\begin{proof}
Part \ref{prop:vst_deriv} follows from direct computations. Parts \ref{prop:vst_Linf} and \ref{prop:vst_deriv_bnd} 
follow from Assumption \ref{a2}:
\begin{gather*}
|v(s, t)| = \abs{ \frac{1}{t-s} \int_s^t p(\tau) \dtau } \leq
\frac{1}{t-s} \int_s^t M \dtau = M \,, \\[3pt]
\abs{\frac{\del v}{\del t}} = \frac{\abs{p(t) - v(s, t)}}{t-s} 
\leq \frac{1}{(t-s)^2} \int_s^t \abs{p(t) - p(\tau)} \dtau
\leq \frac{\norm{\dot p}_{L^\infty(0, t)}}{(t-s)^2} \int_s^t (t - \tau) \dtau
= \frac{1}{2} M .
\end{gather*} 
The estimate for $\partial_s v$ is proved via a similar calculation.
\end{proof}

\subsubsection{Precollisional Velocities and Precollision Times}
In this section we prepare for the key step of change of variables. 
%More specifically, we will change the variable in the integrals from the velocity of the particle to its precollision time. As a result, when
%comparing the drag forces induced by two different velocity profiles, we will compare their integrands at the same precollision
%time rather than at the same velocity. 
To illustrate the idea of change of variables, we consider for a moment a simplified case where $\dot p(t) > 0$ for all $t$. Then for each 
$t \in (0, T)$ the average velocity $\vint{p}_{s, t}$ is strictly increasing in $s$, and thus is a bijection
between $[0, t]$ and $[\vint{p}_{0, t}, \, \vint{p}_{t, t}] = [\eta(t)/t, \, p(t)]$. This allows us to use the change of
variables $v = v(s, t) = \vint{p}_{s, t}$ in \eqref{def:fn_v} to obtain the following expression for $n \geq 1$ and $w > p(t)$:
\begin{align}
f^+_{n+1}(\eta(t), w, t) &= 2 e^{-(w - p(t))^2} \int_{\eta(t)/t}^{p(t)} (p(t) - v) f^-_n(\eta(t), v, t) \dv
\label{eq:pmon_v} \\[2pt]
&= 2 e^{-(w - p(t))^2} \int_0^t \dds{v} (p(t) - v(s, t)) f^-_n(\eta(t), v(s, t), t) \ds . \label{eq:pmon_s} 
\end{align}
One immediate advantage of expression \eqref{eq:pmon_s} is that it allows us to obtain an explicit recursive relationship
between the sequence of outgoing densities $\{f_n^+\}$. Indeed, since the distribution density does not change between
collisions, we have \vspace{-5mm}
\begin{align*}
f^-_n(\eta(t), v(s, t), t) = f^+_n(\eta(s), v(s, t), s).
\end{align*}
This in turn implies
\begin{align*}
f^+_{n+1}(\eta(t), w, t) 
= 2 e^{-(w - p(s))^2} \int_0^t \dds[\tau]{v(s, t)} (p(t) - v(s,t)) f^+_{n}(\eta(s), v(s, t), s) \ds .
\end{align*}
In Sections \ref{sec:prelim_bnds} and \ref{sec:uniq} we show the full usage of a similar recursive relation for obtaining the
estimates for the density function and the drag term.

Without the monotonicity assumption a proper change of variables requires much more work. The main difficulty 
is the non-injectivity of the mapping $v(\cdot, t)$ defined in~\eqref{def:preco-time}. To handle it, we start by identifying that, among all the particles that are to collide with the disk at time $t$, which ones have had 
%which of the particles that are to collide with the disk at time $t$ have had 
a collision in the past. Velocities of such particles will henceforth be called \textit{precollisional}, to signify that the corresponding particles 
have previously collided with the disk. They must satisfy the following condition: 
\begin{quote}
There exists time $s \in [0, t)$ such that 
the particle and the disk have travelled the same distance over $[s, t]$ and $v < p(t)$.
\end{quote}
Since the velocity of the particle does not change
between consecutive collisions, the above condition can be written as 
\begin{gather} \label{cond:same_dist} 
(t-s) v = \int_s^t p(\tau) \, \dtau \qquad \text{or} \qquad v = \vint{p}_{s, t} .
\end{gather}
Introduce the notation 
\begin{align*}
  \ukt = \min_{\mathsmaller{s \in [0, t]}} \vint{p}_{s, t}  \,.
\end{align*} 
Then the precollisional velocities can be characterized as
\begin{prop} 
Suppose a particle with velocity $v$ is colliding with the disk at time $t$ and $v \neq \ukt$. 
Then it has collided with the disk in the past if and only if
\begin{gather} \label{precol_cond}
\ukt < v < p(t).
\end{gather} 
\end{prop}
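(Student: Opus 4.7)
The plan is to exploit the structure of the average‑velocity function $g(\tau) := \vint{p}_{\tau, t}$. For necessity I would suppose the particle has undergone some past collision and let $s \in [0, t)$ denote the most recent such time. Between $s$ and $t$ no collision occurs, so the particle's velocity remains constant at $v$; its straight‑line trajectory starts at $\eta(s)$ at time $s$ and reaches $\eta(t)$ at time $t$. This gives $v(t - s) = \eta(t) - \eta(s) = \int_s^t p(\tau)\dtau$, i.e.\ $v = g(s)$, whence $v \geq \ukt$. The hypothesis $v \neq \ukt$ then upgrades this to $v > \ukt$. The companion bound $v < p(t)$ is inherent in the premise that the event at $t$ is an incoming (right‑side) collision.

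\textbf{Sufficiency.} Conversely, suppose $\ukt < v < p(t)$. I would extend $g$ continuously to $[0, t]$ by setting $g(t) := p(t)$; the Lipschitz regularity of $p$ from Assumption \ref{a2} makes this extension continuous, since $\vint{p}_{\tau, t} \to p(t)$ as $\tau \to t^-$. The minimum of $g$ equals $\ukt$ and is attained at some $s^* \in [0, t]$. Since $g(t) = p(t) > v > \ukt = g(s^*)$, we must have $s^* \in [0, t)$, and applying the intermediate value theorem to $g$ on $[s^*, t]$ produces some $s \in (s^*, t)$ with $g(s) = v$.

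\textbf{Verification and main difficulty.} To confirm that this $s$ corresponds to a genuine physical past collision rather than a merely algebraic root of $g$, I would replace it with $s := \sup\{\tau \in [0, t) : g(\tau) = v\}$. Continuity of $g$ gives $g(s) = v$, and since $g(\tau) \to p(t) > v$ as $\tau \to t^-$ while $g \neq v$ on $(s, t)$, we must have $g(\tau) > v$ throughout $(s, t)$. Equivalently, $\eta(t) - v(t - \tau) > \eta(\tau)$ on $(s, t)$: the backward straight‑line trajectory of the particle stays strictly to the right of the disk on $(s, t)$ and first meets the disk precisely at $\tau = s$, so $s$ is indeed a past collision time. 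The only modest obstacle is this final geometric verification together with the careful boundary handling of $g$ at $\tau = t$; once those are in place the IVT step closes the argument immediately.
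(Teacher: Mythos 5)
Your necessity argument coincides with the paper's, which likewise dismisses that direction as an immediate consequence of \eqref{cond:same_dist}. For sufficiency you take a genuinely different route. The paper argues by contradiction: if the particle had no past collision, its trajectory is the straight line $\omega(s)=\eta(t)-(t-s)v$ on all of $[0,t]$ and must satisfy $\omega(s)\geq\eta(s)$ by Assumption \ref{a0}, yet at the minimizer $s^*$ of $\vint{p}_{\cdot,t}$ one has $\omega(s^*)-\eta(s^*)=(t-s^*)\big(\ukt-v\big)<0$. You instead construct a candidate precollision time by the intermediate value theorem and verify the ``ahead'' condition on $(s,t)$; this is more work but buys more, since your $s$ is precisely a precollision time in the sense of Definition \ref{def:precoll_time} and anticipates the set \Nt{} and the rising-sun structure used later.

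The one soft spot is your closing inference ``the trajectory first meets the disk at $\tau=s$, so $s$ is a past collision time.'' Touching the disk is necessary for a collision but not sufficient — the paper later singles out grazing contacts with $v=p(s)$ for exactly this reason — and in principle the particle's actual most recent collision could lie at some $t_k<s$ with a mere graze at $s$, or one must still rule out that no collision occurred at all. The clean way to close is the paper's one-line appeal to \ref{a0}: if the particle had never collided, the straight line would extend over all of $[0,t]$ and would lie strictly to the left of the disk at $s^*$, since $g(s^*)=\ukt<v$; this is a strict penetration and hence impossible. You already have the strict inequality $g(s^*)<v$ in hand from your IVT step, so the fix is immediate, but as written the last sentence of your verification overstates what ``meeting the disk'' gives you.
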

\begin{proof}
%Suppose the particle has
%collided with the disk in the past. Then since it is colliding with the disk at time $t$ and by the pre-collision condition
%\eqref{precoll_cond} we have, respectively, 
%%\vspace{-5mm}
%\begin{gather*}
%p(t) > v \qquad \text{and} \qquad v = \vint{p}_{s, t} > \vint{p}_{s, t} (t) \,.
%\end{gather*}
Let $\ukt < v < p(t)$. Since $\vint{p}_{s, t}$ is a continuous function of $s$ for any $t$, it must obtain its minimum \ukt 
at some $s^* \in [0, t]$. Assume $s^* < t$ and suppose for contradiction that the particle with velocity $v$ has not collided
with the disk in the past. Let $\omega(s)$ be the position of the particle. Then %\vspace{-5mm}
\begin{align*}
\omega(s) = \eta(t) - (t-s)v.
\end{align*}
Since the particle is colliding with the disk from the right and could not have penetrated the disk by assumption \ref{a0}, 
it must have been to the right of the disk for all $s \in [0, t)$, that is
\begin{gather*}
\omega(s) - \eta(s) \geq 0 \quad \forall s \in [0, t).
\end{gather*}
However, this condition is violated at $s = s^*$ since
\begin{align}
\omega(s^*) - \eta(s^*) &= \big[ \eta(t) - (t-s^*)v \big] - \big[ \eta(t) - (t-s^*)\ukt \big]  \nn
\\
&= \big[ \ukt - v \big](t-s^*) < 0, \label{precol_cond_strict_ineq}
\end{align}
which is a contradiction. 
If $s^* = t$, then $\ukt = \vint{p}_{t, t} = p(t)$. This again violates \eqref{precol_cond}.%in turn implies that no velocity $v$ can satisfy \eqref{precol_cond},
%and thus all particles colliding with the disk at time $t$ are colliding for the first time. 

The converse is an immediate consequence of \eqref{cond:same_dist}. 
\end{proof}
Denote the set of all possible precollisional velocities by \Vt where
\begin{gather} \label{def:Vt}
\Vt = \big( \ukt, \, p(t) \big).
\end{gather}

We now identify the times of the precollisions. 
\begin{definition}[Precollision Time] \label{def:precoll_time}
Suppose a particle with velocity $v \in \Vt$ is to collide with the disk at time~$t$. Then the time $s_v$ is a corresponding
\textit{precollision time} if $(v, s_v)$ satisfies \eqref{cond:same_dist} and the particle has been ahead of the disk for all 
$\tau \in (s, t)$, that is, %\vspace{-3mm}
\begin{gather} \label{cond:ahead}
\eta(t) - (t-\tau)v \geq \eta(\tau) \qquad \fa \tau \in (s, t).  
\end{gather}
Note that this condition is a mathematical formulation of Assumption~\ref{a0}. 
\end{definition}

Let \Nt be the set of all possible precollision times. To construct a bijection between \Vt and \Nt we need a more
explicit characterization of the latter. To this end, we first
%The first step toward obtaining it is to 
rewrite \eqref{cond:ahead} as
\begin{align} \label{cond:ahead-1}
   v \leq \vint{p}_{\tau, t}  \qquad \fa \tau \in (s, t) , \\[-25pt] \nn
\end{align}
which in turn implies \vspace{-5mm}
\begin{gather} \label{cond:ahead-2}%\label{def:vbar}
v \leq \min_{\tau \in [s, t]}\vint{p}_{\tau, t} .
\end{gather}
Motivated by~\eqref{cond:ahead-2}, we define the modified average velocity \vm:
\begin{gather} \label{def:vbar}
 \vm := \min_{\tau \in [s, t]} \vint{p}_{\tau, t}.
\end{gather} 
From the combination of \eqref{cond:ahead-2} with \eqref{cond:same_dist} we conclude that $s_v$ is a precollision time
corresponding to $v$ if and only if $v = v(s, t) = \vm$. Consequently, we have
\begin{gather} \label{def:N-t}
\Nt = \set[{s \in [0, t]}]{ v(s, t) = \vm}. 
\end{gather}
Note the function $\vm[\cdot, t]$ is monotonically, but not necessarily strictly, increasing. It can be thought of
as the tightest monotonically increasing lower envelope for $v(s, t)$; the notation \vme had been chosen to reflect that.
We give an example of \Nt and \vme in Figure~\ref{fig:vm} to help intuitive understanding of their properties.

\smallskip
\noindent \textit{Notation.\hspace{-2pt}} For a given $t$, we will use \vm[\mc A, t] to denote the image of the set $\mc A$ under the map 
\vm[\cdot, t] and $\vme^{-1}(\mc B, t)$ to denote the pre-image of the set $\mc B$ under the map \vm[\cdot, t]. 
Note that the inversion is only performed in the first variable with the second variable $t$ fixed. 

\smallskip

We now establish properties of \Nt and \vme. A large part of the analysis is essentially Riesz's rising sun lemma
\cite{leoniSS} with a sign change.
%\newpage
\begin{lem} \label{le:Ntc_open}
Fix $t \in [0, T]$ and let \Nt be the set defined in~\eqref{def:N-t}. Then $\Ntc = [0, t] \setminus \Nt$ is open in $[0, t]$.   
\end{lem}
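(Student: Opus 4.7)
The plan is to show that $\Ntc = [0, t] \setminus \Nt$ is open by exhibiting, for each $s_0 \in \Ntc$, a $[0,t]$-relative neighborhood of $s_0$ that lies entirely in $\Ntc$. The key observation is that membership in $\Ntc$ is certified by a single strict inequality $v(\tau_0, t) < v(s_0, t)$ for some $\tau_0 \in [s_0, t]$, and such strict inequalities are stable under small perturbations of $s_0$ as long as $v(\cdot, t)$ is continuous.

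First I would unpack~\eqref{def:N-t}: $s \in \Nt$ means $v(s,t) \leq v(\tau, t)$ for every $\tau \in [s,t]$, so $s_0 \in \Ntc$ exactly when there exists $\tau_0 \in [s_0, t]$ with $v(\tau_0, t) < v(s_0, t)$; this automatically forces $\tau_0 > s_0$. Note also that $t \in \Nt$ trivially, so only $s_0 \in [0, t)$ needs attention. Next, I would record that $v(\cdot, t)$ is continuous on $[0, t]$: on $[0, t)$ this is immediate from $v(s,t) = \frac{1}{t-s}\int_s^t p(\tau)\,d\tau$ with $p$ bounded, and at $s = t$ the expression extends continuously to $p(t)$ using the continuity of $p$ (granted by Assumption~\ref{a2}).

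With continuity in hand, the argument closes as follows. Given $s_0 \in \Ntc$ with witness $\tau_0 \in (s_0, t]$, set
\begin{gather*}
\delta_1 = \tfrac{1}{2}(\tau_0 - s_0) > 0, \qquad \epsilon = \tfrac{1}{2}\bigl(v(s_0, t) - v(\tau_0, t)\bigr) > 0 .
\end{gather*}
Continuity at $s_0$ produces $\delta_2 > 0$ such that $|v(s, t) - v(s_0, t)| < \epsilon$ whenever $s \in [0, t]$ and $|s - s_0| < \delta_2$. Letting $\delta = \min(\delta_1, \delta_2)$, any $s \in (s_0 - \delta, s_0 + \delta) \cap [0, t]$ satisfies $s < s_0 + \delta_1 < \tau_0 \leq t$, so $\tau_0 \in [s, t]$, and $v(s, t) > v(s_0, t) - \epsilon > v(\tau_0, t)$. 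Hence $\tau_0$ remains a valid witness and the whole neighborhood is contained in $\Ntc$. The only mild pitfall is keeping the witness $\tau_0$ inside the shifted interval $[s, t]$, which is handled by capping the radius at $(\tau_0 - s_0)/2$; otherwise this is a routine ``strict inequalities survive small perturbations'' argument riding on the continuity of $v(\cdot, t)$.
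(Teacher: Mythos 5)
Your proof is correct and rests on the same two ingredients as the paper's: the characterization of $s_0 \in \Ntc$ by a witness $\tau_0 \in (s_0, t]$ with $v(\tau_0, t) < v(s_0, t)$, and the continuity of $v(\cdot, t)$. The paper merely packages the same continuity argument more compactly, writing $\Ntc$ as a union over $\tau$ of sets of the form $[0,\tau) \cap v^{-1}\bigl((v(\tau,t),\infty), t\bigr)$, each open in $[0,t]$; your $\epsilon$--$\delta$ neighborhood construction is the unpacked version of that.
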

\begin{proof} 
Note that since $\vm[t, t] = v(t, t) = p(t)$, we have $t \in \Nt$. We write \Ntc as
\begin{gather*}
\Ntc = \set[{s \in [0, t)}]{ v(s, t) > \vm }
     = \set[{s \in [0, t)}]{ v(s, t) > v(\tau, t) \text{ for some } \tau \in (s, t) }
     = \bigcup_{\tau \in [0, t)} \hspace{-2mm} \Or_\tau, \\[-25pt]
\end{gather*}
where \dm{ \Or_\tau = [0, \tau) \cap v^{-1}(\left( v(\tau, t), \8 \right), t)}. Since $v(\cdot, t)$ is continuous, 
the pre-image $\Or_\tau$ is open in the subspace topology on $[0, t]$. Therefore \Ntc is an open subset of $[0, t]$. 
\end{proof}

\vspace{-4mm}

\begin{figure}[b] 
\centering
\captionsetup{width=0.75\linewidth} 
\includegraphics[scale=0.25]{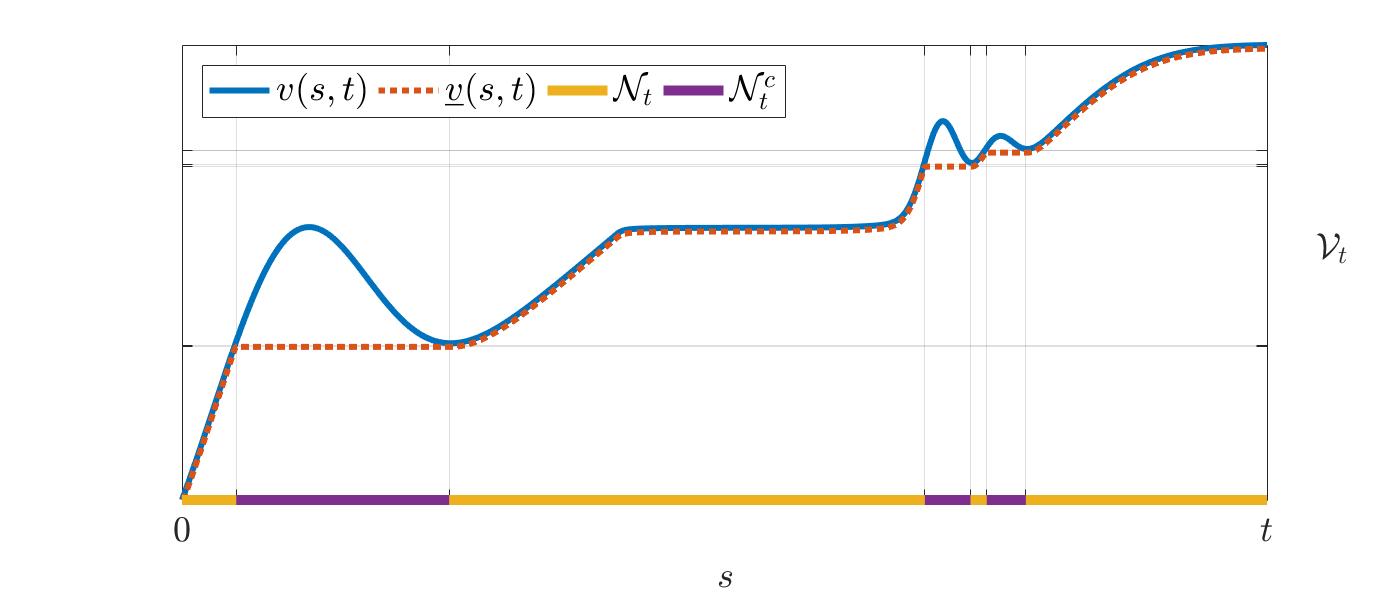} 
\vspace{-2mm} 
\caption{An example of the average velocity $v(s, t)$ and the corresponding modified average velocity \vm, with $t$ fixed.
Note how the difference between the two informs the separation of the time domain into \Nt and \Ntc.} 
\label{fig:vm}
\end{figure}

%\newpage
\begin{lem}[Properties of \vme] \label{le:vmprops} 
Let $\vm$ be the modified velocity defined in~\eqref{def:vbar}.  Then 
\begin{enumerate}[label=(\alph*), leftmargin=*]

\item \label{le:cont} 
$\vm[\cdot, t]: [0, t] \to \Vt$ is continuous. \Ss

\item \label{le:conn_comp}
Let $(a, b)$ be a maximal connected component of \Ntc. Then for all $s \in (a, b)$ we have 
\begin{gather*}
\vm[a, t] = \vm[s, t] = \vm[b, t].
\end{gather*}
Consequently, $\del_s \vm = 0$ on $\Nt^c$.  \Ss

\item \label{le:range}
Restricting \vme to \Nt does not change its range: $\vm[{[0, t]}, t] = \vm[\Nt, t]$.  \Ss

\item \label{le:Lip_s} 
For any fixed $t \in [0,T]$, \vm is Lipschitz in $s$ with $\abs{\del_s \vm[s,t]} \leq M/2$ for almost every $s \in [0, T]$. \Ss 

\item \label{le:Lip_t} 
For any fixed $s \in [0, t]$, \vm is Lipschitz in $t$ with $\abs{\del_t\vm[s, t]} \leq M$ for almost every $t \in [0, T]$. \Ss

\item \label{le:Dt}
Let $\mc{L}(A)$ be the Lebesgue measure of $A$ and define
\begin{gather*}
\mc D_t := \set[{s \in [0, t]}]{ \dds{\vm} \text{ exists} }. \\[-28pt]
\end{gather*}
Then $ \mc{L} ( \vm[ \mc D_t^c, t ] ) =  \mc{L} ( v(\mc D_t^c, t ) ) = 0 $. \Ss 

\item \label{le:deriv-vm} 
For all $s \in \Nt \cap \mc D_t$ we have $ \del_s \vm = \del_s v(s, t)$. %\Ss

\end{enumerate}
\end{lem}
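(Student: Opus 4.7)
The plan is to work through the seven parts sequentially, using throughout two structural facts: the monotonicity $\vm[s_1, t] \le \vm[s_2, t]$ for $s_1 \le s_2$ (since minimizing a fixed function over a smaller set produces a larger minimum), and the two-variable Lipschitz bounds $|\partial_s v|, |\partial_t v| \le M/2$ from Lemma~\ref{lem:technical}.

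Part \ref{le:cont} is immediate since $v(\cdot, t)$ is continuous and the minimum over $[s,t]$ depends continuously on the endpoint. Parts \ref{le:conn_comp} and \ref{le:range} form the structural core. The central observation, in the spirit of Riesz's rising sun lemma, is that the minimum defining $\vm[s,t]$ must be attained at some $\tau^* \in \Nt$: by monotonicity $\vm[\tau^*,t] \ge \vm[s,t]$, while trivially $\vm[\tau^*,t] \le v(\tau^*,t) = \vm[s,t]$, forcing equality. On a maximal component $(a,b)$ of $\Ntc$, since $(a,b) \cap \Nt = \emptyset$, this minimizer must satisfy $\tau^* \ge b$, and therefore $\vm[s,t] = v(\tau^*,t) \ge \vm[b,t]$; combined with monotonicity this gives $\vm[s,t] = \vm[b,t]$ for all $s \in (a,b)$, proving \ref{le:conn_comp}, with \ref{le:range} then an immediate consequence.

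For the Lipschitz parts, I would argue directly from the minimum definition. For \ref{le:Lip_s}, with $s_1 < s_2$ and $\tau_1^* \in [s_1,t]$ achieving $\vm[s_1,t]$, either $\tau_1^* \ge s_2$ (whence $\vm[s_1,t] = \vm[s_2,t]$) or $\tau_1^* < s_2$, in which case $\vm[s_2,t] \le v(s_2,t) \le v(\tau_1^*,t) + \tfrac{M}{2}(s_2 - \tau_1^*) \le \vm[s_1,t] + \tfrac{M}{2}(s_2-s_1)$. Part \ref{le:Lip_t} is the main technical obstacle: varying $t$ changes both the interval $[s,t]$ and the integrand $v(\cdot,t)$. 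The easy direction $\vm[s,t_2] - \vm[s,t_1] \le \tfrac{M}{2}(t_2-t_1)$ comes from evaluating the $t_2$-minimum at the $t_1$-minimizer. The hard direction arises when the $t_2$-minimizer $\tau_2^*$ lies in $(t_1,t_2]$; here one bounds $\vm[s,t_1] - \vm[s,t_2] \le p(t_1) - v(\tau_2^*,t_2)$ and interpolates through $v(t_1, t_2)$, adding a $t$-variation (cost $\tfrac{M}{2}(t_2-t_1)$) to an $s$-variation (cost $\tfrac{M}{2}(t_2-t_1)$), which accounts for the factor $M$ rather than $M/2$.

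Part \ref{le:Dt} is then an immediate consequence of Rademacher's theorem applied to the Lipschitz function $\vm[\cdot,t]$ together with the fact that Lipschitz maps send null sets to null sets (the same argument handles $v(\cdot,t)$). For part \ref{le:deriv-vm}, the clean argument is to consider $\psi(s') := v(s',t) - \vm[s',t] \ge 0$, which vanishes exactly on $\Nt$. Since $v(\cdot, t)$ is $C^1$ in $s$ with derivative $(v(s,t)-p(s))/(t-s)$ and $\vm[\cdot,t]$ is differentiable at $s \in \mc D_t$, $\psi$ is differentiable at such $s$ and attains its global minimum $0$ there, so the first-order condition $\psi'(s) = 0$ at interior points yields $\partial_s \vm = \partial_s v(s, t)$.
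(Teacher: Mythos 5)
Your proposal is correct and follows essentially the same route as the paper: monotonicity of the running minimum, the rising-sun structure of \Nt, the Lipschitz bounds on $v(s,t)$ from Lemma~\ref{lem:technical}, the Lusin property for part (f), and a one-sided first-order comparison at points of \Nt for part (g). If anything, your treatment of part (e) is slightly more explicit than the paper's, since you isolate the case where the minimizer for the enlarged interval falls in $(t_1, t_2]$, which is exactly where the constant degrades from $M/2$ to $M$.
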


%\begin{figure}[t] 
%\centering
%\captionsetup{width=0.7\linewidth}
%%\includegraphics[width=4.5in]{../Global/vbar.jpg}
%\includegraphics[scale=0.2]{../Global/vbar2.jpg}  
%\caption{An example of the average velocity $v(s, t)$ and the corresponding modified average velocity \vm, with $t$ fixed.
%Note how the difference between the two informs the separation of the time domain into \Nt and \Ntc.} 
%\label{fig:vm}
%\end{figure}

\begin{proof} 
\ref{le:cont}
Let $\Eps > 0$ be given. By the uniform continuity of $v(\cdot, t)$ on $[0, t]$, there exists $\delta > 0$ such that 
\begin{align} \label{bound:unif-v-s-t}
   |v(s, t) - v(s', t)| < \Eps  \quad \text{whenever} \quad |s-s'|<\delta .
\end{align} 
Without loss of generality, suppose $0 \leq s - s' \leq \delta$. Then by definition of \vm[\cdot, t] we have
\begin{align*}
  0 
\leq \vm - \vm[s', t]
&= \vm - \min \{\min_{\tau \in [s', s]}v(\tau, t), \,\, \vm\} 
\\
&= \vpran{\vm[s, t] - \min_{\tau \in [s', s]} v(\tau, t)} 
     \Sgn\Big(\vm[s, t] - \min_{\tau \in [s', s]} v(\tau, t) \Big)
\\
&\leq \Big|v(s, t) - \min_{\tau \in [s', s]} v(\tau, t)\Big| < \Eps ,
\end{align*}
where the last inequality follows from~\eqref{bound:unif-v-s-t}.

\medskip
\noindent
\ref{le:conn_comp} 
Since $\vm[\cdot, t]$ is non-decreasing we have $\vm \leq \vm[b, t]$. Suppose for contradiction that $\vm < \vm[b, t]$. 
Then there must exist $\tau \in [s, b)$ such that $\vm = v(\tau, t)$, which in turn implies that $\vm[\tau, t] = v(\tau, t)$.
But then $\tau \in \Nt$ by the definition of $\Nt$, which is a contradiction since $\tau \in (a, b) \subseteq \Ntc$. 
Equality $\vm[a, t] = \vm$ follows from continuity of \vm[\cdot, t]. 
\medskip

\noindent 
\ref{le:range} Take $s \in \Ntc$ and let $(a, b)$ be the largest connected subset of $\Ntc$ containing it. 
By part \ref{le:conn_comp}, we have $\vm = \vm[b, t]$. Thus $\vm[s, t] \in \vm[\Nt, t]$ since $b \in \Nt$. 

\medskip
%\newpage
\noindent 
\ref{le:Lip_s} For $s \in \Ntc$ let $\left(a_s,  b_s \right)$ be the largest connected component of \Ntc 
containing $s$. In other words, the lower bound $a_s$ is the largest time less than $s$ such that $\vm[a_s, t] = v(a_s, t)$. Similarly, the upper bound $b_s$ is the smallest time greater than $s$ such that $\vm[b_s, t] = v(b_s, t)$. For $s \in \Nt$ we simply let $a_s = b_s = s$. Then for both cases we have \vspace{-5mm}
\begin{gather*}
v(a_s, t ) = \vm[a_s, t] = \vm[s, t] = \vm[b_s, t] = v(b_s, t ). 
\end{gather*}

\noindent
Let $\tau, \tau' \in [0, t]$ and assume, without loss of generality, that $\tau \leq \tau'$. If $(\tau, \tau') \ins \Ntc$ 
then $v(\tau', t) - v(\tau, t) = 0$ by part \ref{le:conn_comp}. Otherwise, by Lemma~\ref{lem:technical} we have 
\begin{gather*}
|\vm[\tau', t] - \vm[\tau, t] | = \abs{v(a_{\tau'}, t) - v(b_\tau, t) } 
\leq \frac{M}{2} \left(a_{\tau'} - b_\tau \right) 
\leq \frac{M}{2} \left(\tau' - \tau \right). 
\end{gather*}

\noindent
\ref{le:Lip_t} Let $t'> t$ and fix $s \in [0, t]$. Since $v(s, t)$ is continuous, $\vm = v(\tau, t)$ for some $\tau \in [s, t]$. 
We have
\begin{gather*}
\vm[s, t'] \leq v(\tau, t') \leq v(\tau, t) + \frac{M}{2}|t'-t| \ \implies \ \vm[s, t'] - \vm \leq \frac{M}{2}|t'-t|.
\end{gather*}
On the other hand, for all $\tau \in [0, t]$ we have
\begin{gather*}
    v(\tau, t') 
\geq v(\tau, t) - M|t'-t| 
\ \implies \ 
\vm[s, t'] \geq \vm - M|t'-t|,
\end{gather*}
Thus the function \vm is Lipschitz it $t$. Consequently, $\partial_t \vm$ exist for almost all $t$ and 
$\abs{\del_t\vm[s, t]} \leq M$.

\medskip
\noindent
\ref{le:Dt}
Since \vm is Lipschitz in $s$, it is almost everywhere differentiable and thus $\mc{L}(\mc D^c_t) = 0$.  
Since \vm[\cdot, t] is absolutely continuous, it possesses the Lusin property:
$ \mc{L} ( \vm[ \mc D_t^c, t ] ) = 0 $. The same argument holds for $v(\cdot, t)$. \vspace{-4mm}

\noindent
\ref{le:deriv-vm} 
Let $s \in \Nt \cap \mc D_t$. Then \dds{\vm} is given by the definition of the classical derivative. Therefore, 
\begin{align*}
\dds{\vm} &= \lim_{\tau \to s^+} \frac{\vm[\tau, t] - \vm[s, t]}{\tau - s}
           = \lim_{\tau \to s^+} \frac{\vm[\tau, t] - v(s, t)}{\tau - s}
              \leq \lim_{\tau \to s^+} \frac{v(\tau, t) - v(s, t)}{\tau - s}
                 = \dds{v(s, t)}, \\[5pt]
\dds{v(s, t)} &= \lim_{\tau \to s^-} \frac{v(s, t) - v(\tau, t)}{s - \tau}
               = \lim_{\tau \to s^-} \frac{\vm     - v(\tau, t)}{s - \tau}  
            \leq \lim_{\tau \to s^-} \frac{\vm - \vm[\tau, t]}{s - \tau}  
               = \dds{\vm} .
\end{align*}
It now follows that $ \del_s \vm = \del_s v(s, t)$.
\end{proof}

Since the measure of the set $\mc D^c_t$, as well as its images under both \vm[\cdot, t] and $v(\cdot, t)$, is zero, we can 
safely ignore it from now on. 

From Lemma~\ref{le:vmprops}\ref{le:range} it follows that the map $\vm[\cdot, t] : \Nt \to \Vt$ is a surjection. 
However, it is not necessarily an injection, so further restriction is required. To show that the restriction we
are about to make does not affect the dynamics of the disk we will need the following lemma from \cite{leoniSS} (page 77): 
\begin{lem}[\cite{leoniSS}] \label{lem:measure}
Let $I \ins \R$ be an interval and let $u: I \to \R$. Assume that there exists a set $E \ins I$ (not necessarily measurable)
and $M \geq 0$ such that $u$ is differentiable for all $x \in E$, with
\begin{gather*}
| u'(x) | \leq M \qquad \text{for all } x \in E.
\end{gather*}
Then $\mc{L}_\circ (u(E)) \leq M\mc{L}_\circ (E)$, where $\mc{L}_\circ$ denotes the outer Lebesgue measure. 
\end{lem}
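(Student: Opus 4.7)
The idea is standard: leverage differentiability to produce a \emph{local} Lipschitz-type estimate, quantify it so that the estimate is uniform on a countable increasing family whose union recovers $E$, and then combine this with a fine cover of $E$ to bound the outer measure of the image.

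More concretely, fix $\varepsilon > 0$. For each $x \in E$, differentiability at $x$ gives some $\delta_x > 0$ such that $|u(y) - u(x)| \leq (M + \varepsilon)|y - x|$ for all $y \in I$ with $|y-x| \leq \delta_x$. The plan is to slice $E$ by the size of the admissible neighborhood: set
\begin{gather*}
E_n := \set[\big]{x \in E}{|u(y) - u(x)| \leq (M+\varepsilon)|y - x| \text{ for all } y \in I \text{ with } |y-x| \leq 1/n}.
\end{gather*}
Then $E_n$ is an increasing family with $\bigcup_n E_n = E$, and on each $E_n$ the restriction of $u$ is essentially $(M+\varepsilon)$-Lipschitz when the arguments are within distance $1/n$ of each other.

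Next, for each fixed $n$, cover $E_n$ by a countable collection of open intervals $\{J_k\}$, each of length strictly less than $1/n$ (subdividing if necessary), with $\sum_k |J_k| \leq \mc{L}_\circ(E_n) + \varepsilon$. The key point is that on each $E_n \cap J_k$, any two points are within $1/n$ of each other, so the Lipschitz-type bound applies pointwise relative to \emph{any} base point in the intersection; this forces $u(E_n \cap J_k)$ to have diameter at most $(M+\varepsilon)|J_k|$, hence outer measure at most $(M+\varepsilon)|J_k|$. Countable subadditivity then yields
\begin{gather*}
\mc{L}_\circ(u(E_n)) \leq (M+\varepsilon) \sum_k |J_k| \leq (M+\varepsilon)\bigl(\mc{L}_\circ(E_n) + \varepsilon\bigr) \leq (M+\varepsilon)\bigl(\mc{L}_\circ(E) + \varepsilon\bigr).
\end{gather*}

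Finally, since $\{u(E_n)\}$ is an increasing sequence with union $u(E)$, invoke the continuity from below of Lebesgue outer measure on arbitrary (possibly non-measurable) increasing families to get $\mc{L}_\circ(u(E)) = \lim_n \mc{L}_\circ(u(E_n)) \leq (M+\varepsilon)(\mc{L}_\circ(E) + \varepsilon)$. Sending $\varepsilon \to 0^+$ completes the proof.

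The only subtle point I expect to need care is the very last step: continuity from below of $\mc L_\circ$ on not-necessarily-measurable increasing sequences. This is not automatic from countable subadditivity, but it follows from the existence of measurable hulls for Lebesgue outer measure (pick measurable $B_n \supset E_n$ with equal outer measure and replace by $B'_n = \bigcap_{k\geq n} B_k$ to get a measurable increasing sequence with the same outer measures, then apply standard continuity from below on the $\sigma$-algebra). Once that is in hand, every other step is a transparent consequence of pointwise differentiability plus a standard covering argument.
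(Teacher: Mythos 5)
Your proposal is correct, but note that the paper does not prove this lemma at all: it is quoted verbatim from the cited reference (Leoni, \emph{A First Course in Sobolev Spaces}, p.~77) and used as a black box in the proof of Theorem~\ref{thm:vbar}. What you have written is essentially the standard proof from that source: the $\varepsilon$-quantified slicing of $E$ into the increasing sets $E_n$ on which $u$ is $(M+\varepsilon)$-Lipschitz at scale $1/n$, the fine cover by intervals of length below $1/n$, the diameter bound on each image piece, and subadditivity. You are also right that the one genuinely delicate step is continuity from below of $\mathcal{L}_\circ$ along a not-necessarily-measurable increasing sequence, and your measurable-hull argument handles it correctly (this is exactly the regularity property of Lebesgue outer measure that the textbook proof invokes). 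The only cosmetic point worth tightening is the phrase ``subdividing if necessary'': chopping an open interval into open subintervals of length below $1/n$ drops the cut points, so one should either use half-open subintervals or let the subintervals overlap slightly at the cost of another $\varepsilon$ in total length; either fix is immediate and does not affect the argument.
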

We are now ready to make the restriction and create a bijection.
\begin{thm} \label{thm:vbar}
For any fixed $t \in [0, T]$, let \vm[\cdot, t] be the function defined in~\eqref{def:vbar}. Let
\begin{gather} \label{def:Phit_Wt}
\Phi_t := \set[{ s \in [0, t] }]{ \dds{\vm} > 0 } \qquad \text{and} \qquad \mc{W}_t := \vm[\Phi_t , t]. 
\end{gather}
Then $\vm[s, t] = v(s, t)$ for all $s \in \Phi_t$, the mapping $\vm[\cdot, t]: \Phi_t \to \mc W_t $ is a bijection and is
strictly increasing, and $\mc W_t$ contains almost all postcollisional velocities, that is, 
$\mc{L}(\Vt \! \setminus \! \mc{W}_t) = 0 $.
\end{thm}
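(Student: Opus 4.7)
My plan is to handle the three assertions in turn, drawing almost entirely on Lemma~\ref{le:vmprops} together with Lemma~\ref{lem:measure}.

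For the identity $\vm[s, t] = v(s, t)$ on $\Phi_t$, I would argue $\Phi_t \subseteq \Nt$ and then invoke the defining property of $\Nt$. To see the inclusion, note that membership in $\Phi_t$ requires the derivative $\del_s \vm[s, t]$ to exist and be strictly positive. If instead $s \in \Ntc$, then (since $\Ntc$ is open by Lemma~\ref{le:Ntc_open}) $s$ would lie in the interior of some maximal component $(a, b)$ of $\Ntc$, on which $\vm[\cdot, t]$ is constant by Lemma~\ref{le:vmprops}\ref{le:conn_comp}. The derivative would then be zero, contradicting $s \in \Phi_t$.

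For the bijection claim, surjectivity onto $\mc W_t$ is tautological, leaving injectivity, which I would establish via strict monotonicity. Suppose for contradiction that $s_1 < s_2$ both belong to $\Phi_t$ with $\vm[s_1, t] = \vm[s_2, t]$. Since $\vm[\cdot, t]$ is non-decreasing, it must be constant on $[s_1, s_2]$, so the right-hand difference quotient at $s_1$ vanishes. Combined with the existence of $\del_s \vm[s_1, t]$, this forces $\del_s \vm[s_1, t] = 0$, contradicting $s_1 \in \Phi_t$.

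The substance lies in the measure claim $\mc{L}(\Vt \setminus \mc W_t) = 0$. The first step is to show $\Vt \subseteq \vm[\Nt, t]$: continuity of $\vm[\cdot, t]$ (Lemma~\ref{le:vmprops}\ref{le:cont}) together with the boundary values $\vm[0, t] = \ukt$ and $\vm[t, t] = p(t)$ yield $\vm[{[0, t]}, t] = [\ukt, p(t)] \supseteq \Vt$, and Lemma~\ref{le:vmprops}\ref{le:range} upgrades this to $\Vt \subseteq \vm[\Nt, t]$. Since $\Phi_t \subseteq \Nt$, it then suffices to control the image $\vm[\Nt \setminus \Phi_t, t]$. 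I would split $\Nt \setminus \Phi_t$ into its intersections with $\mc D_t^c$ and with $\mc D_t$: the image of the first piece has measure zero by Lemma~\ref{le:vmprops}\ref{le:Dt}, while on the second the derivative exists, must be non-negative by monotonicity of $\vm[\cdot, t]$, and cannot be strictly positive (else the point would lie in $\Phi_t$), so equals zero. Lemma~\ref{lem:measure} applied with $M = 0$ then gives outer measure zero for the corresponding image.

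The anticipated obstacle is measurability: $\Phi_t$ is defined by a pointwise derivative condition and is not a priori Borel, so I would need to phrase the final step in terms of outer Lebesgue measure and cite Lemma~\ref{lem:measure} in precisely that form, rather than appealing to a standard pushforward or change-of-variables estimate that would demand measurability of $(\Nt \setminus \Phi_t) \cap \mc D_t$.
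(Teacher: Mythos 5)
Your proposal is correct and follows essentially the same route as the paper: $\Phi_t\subseteq\Nt$ via Lemma~\ref{le:vmprops}\ref{le:conn_comp}, strict monotonicity on $\Phi_t$ from monotonicity plus the positive-derivative condition, and the measure claim via Lemma~\ref{lem:measure} with $M=0$ applied to the complement of $\Phi_t$. The only difference is that you are slightly more explicit than the paper at two points — spelling out the contradiction behind strict injectivity, and splitting off the non-differentiability set $\mc D_t^c$ (which the paper discards in advance via Lemma~\ref{le:vmprops}\ref{le:Dt}) before invoking Lemma~\ref{lem:measure} — both of which are refinements, not a different argument.
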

\begin{proof}
From Lemma~\ref{le:vmprops}\ref{le:conn_comp} we know that $\del_s \vm \equiv 0$ for all $s \in \Ntc$, so it must be the case
that $\Phi_t \ins \Nt$. Furthermore, since \vm[\cdot, t] is a monotonically increasing function on the interval $[0, t]$, its
restriction to $\Phi_t$ is strictly increasing and thus is a bijection between its domain and range. \noindent
Choosing $I = [0, t]$, $u = \vm[\cdot, t]$, $E = \Phi_t^c$ and $M=0$ in Lemma~\ref{lem:measure} gives
\begin{gather*}
\mc{L}_\circ (\mc{W}_t^c ) = \mc{L}_\circ ( \vm[\Phi_t^c, t] ) = 0.
\end{gather*}
Hence $\mc{L}_\circ(\Vt \! \setminus \! \mc{W}_t) = 0 $, so $\Vt \! \setminus \! \mc{W}_t$ is measurable and almost all postcollisional velocities are included in $\mc{W}_t$.
\end{proof}

\begin{rmk}
We have not yet discussed the relationship between the velocity of the particle that had precollided with the disk at
time $s$ and the velocity of the disk itself at time $s$; one would expect the particle to be moving faster in that case. 
Indeed, combining \eqref{cond:ahead} with \eqref{cond:same_dist} yields  
\begin{gather*}
\eta(s) + (\tau-s)v \geq \eta(\tau) \qquad \fa \tau \in (s, t),
\end{gather*}
which can in turn be rewritten as $ v \geq \vint{p}_{s, \tau}$ for all $\tau \in (s, t)$. Letting $\tau \to s$ gives
$v \geq \vint{p}_{s, s} = p(s)$, so the particle is, at least, can not be slower than the disk. The case $v = p(s)$ is the 
grazing precollision. Since 
\begin{gather*}
\dds{v(s, t)} = \frac{v(s, t) - p(s)}{t-s},
\end{gather*}
all velocities that have had a grazing precollision and their corresponding (non-unique!) precollision times are collected in 
$\mc W_t^c$ and $\Phi_t^c$ respectively. Since $ \mc{L} ( \vm[\Phi_t^c, t] ) = 0 $, particles that have had a grazing precollision 
have no effect on the dynamics of the disk, and thus can be safely excluded.  
\end{rmk}

\subsubsection{Change of Variables}
We now make a change variables in \eqref{def:Grec}: by~\eqref{def:Vt} and Theorem~\ref{thm:vbar}, we have
\begin{align}
G_{p, \text{rec}}(t) 
&= \int_{\Vt} \vpran{(p(t) - v)^2 + \frac{\sqrt{\pi}}{2} (p(t) - v)} \frec^-(\eta(t), v, t) \dv \nn 
\\[3pt]
&= \int_{\mc W_t} \vpran{(p(t) - v)^2 + \frac{\sqrt{\pi}}{2} (p(t) - v)} \frec^-(\eta(t), v, t) \dv \nn 
\\[3pt]
&= \int_{\Phi_t} \dds{\vm} \vpran{(p(t) - v(s, t))^2 + \frac{\sqrt{\pi}}{2} (p(t) - v(s, t))} 
\frec^-(\eta(t), v(s, t), t) \ds.  \label{formula:change-variable-1} %\nn
\end{align}
Furthermore, since \dds{\vm} vanishes on $\Phi_t^c$, we can write
\begin{align}
G_{p, \text{rec}}(t)  
&=  \int_{\Phi_t} \dds{\vm} \vpran{(p(t) - v(s, t))^2 + \frac{\sqrt{\pi}}{2} (p(t) - v(s, t))} 
\frec^-(\eta(t), v(s, t), t) \ds \nn 
\\[3pt]
&= \int_{\Phi_t} \ (\ldots) \ \ds + \int_{\Phi_t^c} \ (\ldots) \ \ds \nn
%\label{eq:plus0} 
\\[3pt]
&= \int_0^t \dds{\vm} \vpran{(p(t) - v(s, t))^2 + \frac{\sqrt{\pi}}{2} (p(t) - v(s, t))} \frec(s, t) \ds ,
\label{eq:vm_once}
\end{align} 
%
%\Ss\noindent
where, with a slight abuse of notation, we have written
\begin{align} \label{def:f-n-abbre}
\frec(s, t) = \sum_{n\geq1} f_n(s, t) = \sum_{n\geq1} f^-_n(\eta(t), v(s, t), t) = \sum_{n\geq1} f^+_n(\eta(s), v(s, t), s) \,.
\end{align}
The last equality in~\eqref{def:f-n-abbre} holds because the distribution density does not change between collisions. 
Note that in \eqref{eq:vm_once}, the modified velocity \vm needs to appear only in the derivative since whenever 
$\partial_s \vm \neq 0$, we have $\vm = v(s, t)$. 

Making the same change of variables in \eqref{def:fn_v} for $n\geq 1$ and using the notation in~\eqref{def:f-n-abbre} lead us 
to the key recurrence relation: 
\begin{align} \label{eq:rec}
f_{n+1}(s, t) &= 2 e^{-(v(s, t) - p(s))^2} \int_0^s \dds{\vm[\tau, s]} (p(s) - v(\tau, s)) f_n(\tau, s) \dtau. 
\end{align}

For future convenience, we define the density flux of $f_n$ as
\begin{gather} \label{def:j-n}
j_n(s) = \int_0^s \dds{\vm[\tau, s]} (p(s) - v(\tau, s)) f_n(\tau, s) \dtau,
\end{gather}
which allows us to write \vspace{-2mm}
\begin{gather} \label{eq:rec-1}
f_{n+1}(s, t) = 2 e^{-(v(s, t) - p(s))^2} j_n(s).
\end{gather}

The change of variables does not apply to the particles that have not collided with the disk previously; since such particles
maintain the initial density distribution, we have
\begin{align} 
f_1(s, t) &= 2 e^{-(v(s, t) - p(s))^2} \int^{\vm[0, s]}_{-\8} (p(s) - v) \phi_0(v) \dv, \label{def:f-1}
\\[2pt]
G_{p, 0}(t) &= \int^{\vm[0, t]}_{-\8} \vpran{(p(t) - v)^2 + \frac{\sqrt{\pi}}{2} (p(t) - v)} \phi_0(v) \dv .
\label{def:G-p-0}
\end{align}
Recalling the definition of $G_{p, \text{rec}}$ in~\eqref{def:Grec}, we have constructed a decomposition of the drag force:
\begin{align*}
   G_p(t) = G_{p, 0}(t) + G_{p, \text{rec}} .
\end{align*}
\begin{rmk}
Note that even the particles that had no precollisions obey Assumption \ref{a0} or, equivalently, 
the mathematical formulation in~\eqref{cond:ahead}. This is why the effective integration domain in $G_{p, 0}$ and 
$f_1(s, t)$ is $(-\infty, \vm[0, t])$ instead of $(-\infty, p(t))$: the latter would allow the particles originally in front
of the disk to fall behind the disk.
\end{rmk}

\section{Preliminary Bounds} \label{sec:prelim_bnds}
For future convenience we define 
\begin{gather} \label{def:alpha_n}
\alpha_n(s) = \frac{\left( 2M^2 s\right)^n}{n!}, \qquad \alpha_0(s) \equiv 1, \qquad \alpha_{-1}(s) \equiv 0.
\end{gather}

\noindent
In this section we use the recurrence relation \eqref{eq:rec} to derive essential bounds on $f_n$ and its derivatives;
they are summarized in two propositions. 
\begin{prop} \label{bound:f-n-1}
Let $j_n$ and $f_n$ be the iterative sequences given by~\eqref{def:j-n} and~\eqref{eq:rec}-\eqref{def:f-1} respectively. 
Let $M$ be the Lipschitz bound in Assumption~\ref{a2}. Then there exists a constant $Q_1$ that does not depend on $n$ such 
that for any $n \geq 1$ we have 
\begin{align} \label{bound:f-n-L-infty}
0 \leq f_n(s, t) \leq Q_1 \alpha_{n-1}(s)
\qquad \text{and} \qquad 
0 \leq j_n(s) \leq \frac{1}{2}Q_1 \alpha_n(s) .
\end{align}
Moreover, for each $s \in [0, t]$, the function $f_n(s, \cdot) \in C^1([0, T])$ with the bound
\begin{gather} \label{bound:f-n-C-1}
\abs{\dds[t]{} f_n(s, t)} \leq 4M^2 Q_1 \alpha_{n-1}(s). 
\end{gather}
As a consequence, the function $\frec$ defined in~\eqref{def:frec} satisfies
\begin{align} \label{bound:f-n-C-1}
0 \leq \frec(s, t) \leq Q_1 e^{2M^2 s} 
\qquad \text{and} \qquad 
\abs{\dds[t]{} \frec(s, t)} \leq 4M^2 Q_1 e^{2M^2 s}. 
\end{align}
\end{prop}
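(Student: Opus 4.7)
The plan is to prove the bounds on $f_n$ and $j_n$ simultaneously by induction on $n$, exploiting the recurrence~\eqref{eq:rec-1} together with the pointwise estimates from Lemmas~\ref{lem:technical} and~\ref{le:vmprops}; the bounds on $\frec$ and $\partial_t \frec$ then follow by summing the series $\sum_{n \geq 1} \alpha_{n-1}(s) = e^{2M^2 s}$.

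For the base case $n = 1$, I would use formula~\eqref{def:f-1}. Bounding the Gaussian by $1$ and writing $|p(s) - v| \leq M + |v|$, the integral is controlled by $M \|\phi_0\|_{L^1} + \int_{\R} |v| \phi_0(v) \dv$, which is finite by Assumption~\ref{a1}. This defines an admissible constant $Q_1$ such that $0 \leq f_1(s, t) \leq Q_1 = Q_1 \alpha_0(s)$; positivity holds because $\phi_0 \geq 0$ and $v \leq \vm[0, s] \leq p(s)$ throughout the integration domain (using monotonicity of $\vm[\cdot, s]$ and $\vm[s, s] = p(s)$). For $j_1$, combining $|\partial_s \vm| \leq M/2$ from Lemma~\ref{le:vmprops}\ref{le:Lip_s} with $|p(s) - v(\tau, s)| \leq 2M$ from Lemma~\ref{lem:technical}\ref{prop:vst_Linf} gives $j_1(s) \leq M^2 Q_1 s = \tfrac{1}{2} Q_1 \alpha_1(s)$.

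The inductive step is a clean two-term recursion. Assuming the bounds hold at level $n$, the estimate $f_{n+1}(s, t) \leq 2 \cdot 1 \cdot j_n(s) \leq Q_1 \alpha_n(s)$ is immediate from~\eqref{eq:rec-1}. For $j_{n+1}$, I would use the elementary identity $\int_0^s \alpha_n(\tau) \dtau = \alpha_{n+1}(s) / (2M^2)$ together with $|\partial_s \vm| \cdot |p(s) - v(\tau, s)| \leq M^2$, yielding $j_{n+1}(s) \leq \tfrac{1}{2} Q_1 \alpha_{n+1}(s)$. Positivity propagates because $\partial_s \vm \geq 0$ by monotonicity and $v(\tau, s) \leq p(s)$ on $\Phi_s$ (again from $\vm[\tau, s] \leq \vm[s, s] = p(s)$). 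The design of $\alpha_n$ in~\eqref{def:alpha_n} is precisely what makes the induction close at each step with the same constant~$Q_1$.

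For the time derivative, since $j_{n-1}(s)$ in~\eqref{eq:rec-1} is independent of $t$, differentiation yields $\partial_t f_n(s, t) = -4 (v(s, t) - p(s)) e^{-(v(s, t) - p(s))^2} \partial_t v(s, t) \cdot j_{n-1}(s)$ for $n \geq 2$, while an analogous direct computation from~\eqref{def:f-1} handles $n = 1$. Bounding $|v - p| \leq 2M$, the exponential by $1$, and $|\partial_t v| \leq M/2$ from Lemma~\ref{lem:technical}\ref{prop:vst_deriv_bnd} gives $|\partial_t f_n(s, t)| \leq 4 M^2 j_{n-1}(s) \leq 2 M^2 Q_1 \alpha_{n-1}(s)$, well within the claimed bound. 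The main delicate point in the whole argument is the base case, where one must balance the $L^1$, $L^\infty$, and moment information from Assumption~\ref{a1} against an integration domain extending to $-\infty$ (so the Gaussian provides no decay on that side); everything else is a mechanical unwinding of the recursion.
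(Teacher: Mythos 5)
Your proposal is correct and follows essentially the same route as the paper: the base case is controlled by the mass and first moment of $\phi_0$ from Assumption~\ref{a1}, the induction closes via $|\partial_\tau \vm| \, |p(s) - v(\tau, s)| \leq M^2$ and $\int_0^s \alpha_n(\tau)\,\dtau = \alpha_{n+1}(s)/(2M^2)$, and the $t$-derivative bound comes from differentiating only the Gaussian prefactor in~\eqref{eq:rec-1}. The only (harmless) additions are your explicit positivity argument and the observation that the derivative bound $2M^2 Q_1 \alpha_{n-1}(s)$ is sharper than the stated $4M^2 Q_1 \alpha_{n-1}(s)$.
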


\begin{proof} 
First we derive the bound~\eqref{bound:f-n-L-infty}. For $n=1$ we use the definition of $f_1$ in~\eqref{def:f-1} to write
\begin{gather*}
0 
\leq f_1(s, t) 
\leq 2\int_{-\8}^{\vm[0, s]} \left( p(s) - v \right) \phi_0(v) \dv 
\leq 2\int_{-\8}^M \left( M - v \right) \phi_0(v) \dv 
=: Q_1 .
\end{gather*}
For $n \geq 2$ we apply~\eqref{eq:rec} together with the definition of $\alpha_n$ in~\eqref{def:alpha_n}:
\begin{align*}
f_n(s, t) &= 2e^{ -( v(s, t) - p(s) )^2 } \int_0^{s} \dds[\tau]{\vm[\tau, s]} (p(s) - v(\tau, s)) f_{n-1} (\tau, s) \dtau \\
&\leq 2M^2 \int_0^{s} Q_1 \alpha_{n-2} (\tau) \dtau = Q_1\alpha_{n-1}(s). 
\end{align*}
Note that the above step also gives the bound of $j_n$. Indeed, by its definition,
\begin{align*}
j_n(s) \leq \int_0^{s} \abs{\dds[\tau]{\vm[\tau, s]}} \abs{p(s) - v(\tau, s)} f_{n} (\tau, s) \dtau
\leq
M^2 \int_0^s Q_1 \alpha_{n-1}(\tau) \dtau
= \frac{1}{2}Q_1 \alpha_n(s) \,.
\end{align*}
The bound~\eqref{bound:f-n-C-1} follows directly from the definition of $f_n$. Indeed, 
\begin{gather*}
\abs{\dds[t]{}f_n(s, t)} 
= 2 \abs{\dds[t]{} e^{ -( v(s, t) - p(s) )^2 } j_{n-1}(s)}
= 2 \abs{v(s, t) - p(s)} \abs{\dds[t]{v(s, t)}} f_n(s, t) \leq 4M^2 Q_1 \alpha_{n-1}(s). 
\end{gather*}
The estimates for $\frec$ and $\del_t \frec$ follow by summing the bounds for $f_n$ and $\del_t f_n$.
\end{proof}

%The derivation of the Lipschitz bound for $f_n(s, t)$ with respect to the first variable is more involved. The result states
\begin{prop} \label{prop:bound-f-n-s}
For all $t \in [0, T]$ the function $f_n(s, t)$ is Lipschitz in $s$. As a result, it is almost everywhere differentiable in 
$s \in [0, t]$. Moreover, there exists a positive constant $Q_3$ that does not depend on $n$ such that 
\begin{align}
\left| \dds{f_{n+1}(s, t)} \right| & \leq 3^n Q_3 \vpran{\alpha_n(s) + \alpha_{n-1}(s)} ,
\qquad n \geq 0, \label{bound:f-n-deriv-s-simp}
\end{align}
As a consequence, the function $\frec$ defined in~\eqref{def:frec} satisfies
\begin{align} \label{bnd:frec_s}
\left| \dds{\frec(s, t)} \right| & \leq 4Q_3e^{6M^2 s}.
\end{align}
\end{prop}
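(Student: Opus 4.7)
The plan is induction on $n$, using the recursive formula~\eqref{eq:rec-1}, which reads $f_{n+1}(s, t) = 2 e^{-(v(s,t) - p(s))^2} j_n(s)$. For the base case (the bound on $\partial_s f_1$), I would differentiate the explicit formula~\eqref{def:f-1} directly. The exponential prefactor is Lipschitz in $s$ with a bounded derivative because both $v(s, t)$ and $p(s)$ are Lipschitz by Lemma~\ref{lem:technical}. The incomplete moment $\int_{-\infty}^{\vm[0,s]}(p(s) - v)\phi_0(v) \, dv$ is Lipschitz in $s$ as well: writing out the difference and using $\phi_0 \in L^1 \cap L^\infty$ together with the Lipschitz properties of $\vm[0, \cdot]$ (Lemma~\ref{le:vmprops}\ref{le:Lip_t}) and of $p$ gives an $O(h)$ bound. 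This establishes the base case, with a constant independent of $s$.

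For the inductive step, the product rule formally yields
\begin{align*}
\partial_s f_{n+1}(s, t) = 2\,\partial_s\bigl[e^{-(v(s,t) - p(s))^2}\bigr]\, j_n(s) + 2\, e^{-(v(s,t) - p(s))^2}\, j_n'(s),
\end{align*}
and the first summand is bounded by a constant times $\alpha_n(s)$ using Proposition~\ref{bound:f-n-1} and Lemma~\ref{lem:technical}. To estimate $j_n'(s)$, I would work with finite differences $j_n(s+h) - j_n(s)$, split into the contribution from the new interval $[s, s+h]$ and the change over the common interval $[0, s]$. The new interval contributes $O(h^2)$ because $|p(s+h) - v(\tau, s+h)| = O(h)$ there. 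On the common interval, the difference of the integrands splits into three pieces, capturing the respective changes of $\partial_\tau \vm[\tau, \cdot]$, of $(p(\cdot) - v(\tau, \cdot))$, and of $f_n(\tau, \cdot)$ as the second argument moves from $s$ to $s+h$. The middle piece is immediate from Lemma~\ref{lem:technical}; the last follows from the formula $f_n(\tau, s) = 2 e^{-(v(\tau, s) - p(\tau))^2} j_{n-1}(\tau)$, whose $s$-dependence enters only through the exponential.

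The main obstacle is the first piece, which naively would require the mixed partial $\partial_s \partial_\tau \vm$, an object over which we have no control. My resolution is to integrate by parts in $\tau$, converting $\partial_\tau \vm[\tau, s+h] - \partial_\tau \vm[\tau, s]$ into the Lipschitz-in-$t$ difference $\vm[\tau, s+h] - \vm[\tau, s]$, which is $O(h)$ uniformly in $\tau$ by Lemma~\ref{le:vmprops}\ref{le:Lip_t}. The boundary contribution at $\tau = s$ is even smaller, of order $h^2$, since both $p(s+h) - v(s, s+h)$ and $\vm[s, s+h] - \vm[s, s]$ are $O(h)$; the contribution at $\tau = 0$ is $O(h)$ times a constant. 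The interior integral produced by the integration by parts contains a factor $\partial_\tau f_n(\tau, s+h)$, which is precisely the quantity bounded at level $n$ of the induction.

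Assembling all contributions yields an inequality of the schematic form
\begin{align*}
\left| \dds{f_{n+1}(s, t)} \right| \leq C\bigl(\alpha_n(s) + \alpha_{n-1}(s)\bigr) + C \int_0^s \left| \dds[\tau]{f_n(\tau, s)} \right| d\tau,
\end{align*}
and combining with the induction hypothesis together with the identity $\int_0^s \alpha_k(\tau)\, d\tau = \alpha_{k+1}(s)/(2M^2)$ produces the claimed factor $3^n$; the $3$ reflects the three split pieces. Finally, the bound on $\partial_s \frec$ follows by summing over $n \geq 1$, using $\sum_{n \geq 0} 3^n \alpha_n(s) = e^{6 M^2 s}$.
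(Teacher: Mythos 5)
Your proposal follows essentially the same route as the paper's proof: the base case by direct differentiation of $f_1$, the inductive step via finite differences of $j_n$ split into the new-interval contribution plus the three changes (of $\del_\tau\vme$, of $p - v$, and of $f_n$) over the common interval, with the key integration by parts converting the difference of $\del_\tau\vme$ into the Lipschitz-in-$t$ increment of $\vme$ and leaving the inductively controlled $\del_\tau f_n$ in the interior integral, then closing the Gronwall-type recursion to get the $3^n$ factor and summing for $\frec$. The argument is correct and matches the paper's in all essentials (your observation that the new-interval term is actually $O(h^2)$ is even slightly sharper than needed).
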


\begin{proof}
Fix $t \in [0, T]$. %We use an induction proof to show~\eqref{bound:f-n-deriv-s-simp}, starting with the cases where $n = 0, 1$.
For $n=0$, the definition of $f_1(s, t)$ in~\eqref{def:f-1} shows it is Lipschitz in $s$ since $v(s, t)$, $p(s)$, and 
\vm[0, s] are all Lipschitz in $s$. 
This allows us to obtain the desired bound by a direct calculation:
\begin{align*}
\abs{\dds{}f_1(s, t)} 
&= \abs{\dds{}\left( 2e^{ -(v(s, t) - p(s))^2 } \int_{-\8}^{\vm[0, s]} (p(s) - v) \phi_0(v) \dv \right)} 
\\[5pt]
&\leq \abs{2(v(s, t) - p(s)) \left( \dds{v(s, t)} - \dot p(s) \right)} f_1(s, t) 
\\[5pt]
& \quad \, + 2e^{ -(v(s, t) - p(s))^2} \abs{\dds{\vme(0, s)} \left( p(s) - \vm[0, s] \right)} \phi_0(\vme(0, s)) 
\\[5pt]
& \quad \, + 2e^{ -(v(s, t) - p(s))^2 } \int_{-\8}^{\vm[0, s]} \abs{\dot p(s)} \phi_0(v) \dv 
\\[5pt]
&\leq 8M^2 Q_1 + 4M^2 \| \phi_0 \|_\8 + 2M \| \phi_0 \|_1 =: Q_2.
\end{align*}
We now proceed by induction. Assume that the conclusion holds for $f_n$. Without loss of generality, assume 
$0 \leq s < s' \leq t$. Then
%\vspace{-3mm}
\begin{align*}
j_n(s) - j_n(s') &= \int_0^{s} \dds[\tau]{\vme(\tau, s)}(p(s) - v(\tau, s)) f_n (\tau, s) \dtau
- \int_0^{s'} \dds[\tau]{\vme(\tau, s')}(p(s') - v(\tau, s')) f_n (\tau, s') \dtau 
\\[2pt]
&= \int_{s'}^{s} \dds[\tau]{\vme(\tau, s)}(p(s) - v(\tau, s)) f_n (\tau, s) \dtau \\[2pt]
& \quad \, + \int_0^{s'} \left[ \dds[\tau]{\vme(\tau, s)} - \dds[\tau]{\vme(\tau, s')} \right]
(p(s) - v(\tau, s)) f_n (\tau, s) \dtau \\[2pt]
& \quad \, + \int_0^{s'} \dds[\tau]{\vme(\tau, s')} \big[(p(s) - v(\tau, s)) - (p(s') - v(\tau, s')) \big] f_n (\tau, s) \dtau \\[2pt]
& \quad \, + \int_0^{s'} \dds[\tau]{\vme(\tau, s')} (p(s') - v(\tau, s')) \big[ f_n (\tau, s) - f_n (\tau, s') \big]\dtau %\\[5pt]
=: I_1 + I_2 + I_3 + I_4.
\end{align*}

\noindent
By Lemma~\ref{le:vmprops}(d) and Proposition~\ref{bound:f-n-1}, we obtain estimates of $I_1$, $I_3$ and $I_4$ as follows:
\begin{align*}
\abs{I_1} 
&= \abs{\int_{s'}^{s} \dds[\tau]{\vme(\tau, s)}(p(s)-v(\tau, s)) f_n (\tau, s) \dtau} 
\leq |s-s'| M^2 Q_1 \alpha_{n-1}(s), \\%[5pt]
\abs{I_3} 
&= \abs{\int_0^{s'} \dds[\tau]{\vme(\tau, s')} \left[(p(s) - v(\tau, s)) - (p(s') - v(\tau, s')) \right] 
f_n (\tau, s) \dtau} \\%[5pt]
&\leq |s-s'|M^2\int_0^{s'}Q_1 \alpha_{n-1}(\tau) \dtau \leq |s-s'|\frac{Q_1}{2} \alpha_n(s'), %\\[5pt]
\end{align*}
and
\begin{align*}
%\intertext{and}
%
\hspace{14pt}
\abs{I_4} 
&= \abs{\int_0^{s'} \dds[\tau]{\vme(\tau, s')} (p(s') - v(\tau, s')) \left[ f_n (\tau, s) - f_n (\tau, s') \right]\dtau} 
\\%[5pt]
&\leq M^2\int_0^{s'} \abs{f_n (\tau, s) - f_n (\tau, s')}\dtau 
\leq M^2 |s-s'| \int_0^{s'} 4M^2Q_1 \alpha_{n-1}(\tau)  \dtau 
\\[2pt]
&\leq |s-s'| 2M^2Q_1 \alpha_n(s').
\end{align*}

To bound $I_2$ we note that by Lemma~\ref{le:vmprops}(d) and the induction hypothesis on $f_n$, the integrands $\vm[\cdot, s]$,
 $\vm[\cdot, s']$ and $(p(s) - v(\cdot, s)) f_n (\cdot, s)$ are all Lipschitz. Hence we can integrate by parts and obtain
\begin{align*}
I_2 &= \int_0^{s'} \left[ \dds[\tau]{\vme(\tau, s)} - \dds[\tau]{\vme(\tau, s')} \right]
(p(s) - v(\tau, s)) f_n (\tau, s) \dtau \\[5pt] 
&= \big[ \left( \vme(\tau, s) - \vme(\tau, s') \right) (p(s) - v(\tau, s)) f_n (\tau, s)\big]_{\tau=0}^{\tau=s} \\[5pt]
& \quad \, + \int_0^{s'} \big[ \vme(\tau, s) - \vme(\tau, s') \big] \dds[\tau]{v(\tau, s)} f_n (\tau, s) \dtau \\[5pt] 
& \quad \, - \int_0^{s'} \big[ \vme(\tau, s) - \vme(\tau, s') \big] (p(s) - v(\tau, s)) \dds[\tau]{f_n (\tau, s)} \dtau \,.
\end{align*} %\\[5pt]
This gives the bound
\begin{align*}
\abs{I_2}
&\leq 2 M | \vme(0, s) - \vme(0, s') |Q_1 \delta_{1n} + \frac{M^2}{2}|s-s'|\int_0^{s'} Q_1 \alpha_{n-1}(\tau) \dtau 
+ 2 |s-s'|M^2 \int_0^{s'} \left| \dds[\tau]{f_n (\tau, s)} \right| \dtau \\[5pt]
&\leq 2 |s-s'|M^2 Q_1 \delta_{1n} + \frac{Q_1}{4}|s-s'| \alpha_n(s') 
+ 2 |s-s'|M^2 \int_0^{s'} \left| \dds[\tau]{f_n (\tau, s)} \right| \dtau,
\end{align*}
where $\delta_{1n}$ is the Kronecker delta: $\delta_{1n} = 1$ when $n=1$ and vanishes otherwise. 
Combining the estimates for $I_1$-$I_4$, we have
\begin{gather*}
  \frac{| j_n(s) - j_n(s') |}{|s-s'|} 
\leq 
  Q_1 \alpha_n(s) \left(  \frac{3}{4} 
  + 2 M^2 \right) 
  + 2 M^2 Q_1 \alpha_{n-1}(s') + M^2 Q_1 \delta_{1n} 
  + 2 M^2 \int_0^{s'} \left| \dds[\tau]{f_n (\tau, s)} \right| \dtau \,.
\end{gather*}
The right-hand side of the inequality above is bounded uniformly in $s$ and $s'$ since 
$\partial_\tau f_n (\tau, s) \in L^\infty(0, s)$ by the induction assumption.
Therefore, $j_n(s)$ is Lipschitz, and thus differentiable almost everywhere with
\begin{align*}
   \left| \dds{j_n(s)} \right| 
&= \lim_{s' \to s} \frac{| j_n(s) - j_n(s') |}{|s-s'|} 
\\
&\leq 
   Q_1 \alpha_n(s) \left(  \frac{3}{4} 
   + 2M^2 \right) 
   + M^2 Q_1 \alpha_{n-1}(s) 
   + 2 M^2 Q_1 \delta_{1n} 
   + 2 M^2 \int_0^{s} \left| \dds[\tau]{f_n (\tau, s)} \right| \dtau.
\end{align*}
To derive the Lipschitz bound for $f_{n+1}$ we separate the two cases where $n=1$ and $n \geq 2$. For $n=1$ we have
\begin{align*}
  \abs{\dds{j_1(s)}} 
&\leq 
  Q_1 \alpha_1(s) \left( \frac{3}{4} + 2M^2 \right) 
  + M^2 Q_1 
  + 2 M^2 Q_1 
  + 2 M^2 \int_0^{s} Q_2 \dtau \\[5pt]
&\leq  
   \alpha_{1}(s) 
      \left( \frac{3}{4} Q_1 + 2M^2 Q_1 + Q_2 \right) 
   + 3M^2 Q_1  .
%= \alpha_{1}(s) Q_3 + 3M^2 Q_1.
\end{align*}
Applying the bound above in the definition of $f_2$ gives
\begin{align*}
\abs{\dds{f_2(s, t)}} 
&= \abs{\dds{}\left( 2e^{ -(v(s, t) - p(s))^2 } j_1(s) \right)}
\\[5pt]
&\leq  4 \abs{v(s, t) - p(s)} \abs{ \dds{v(s, t)} - \dot p(s)} 
e^{ -(v(s, t) - p(s))^2 } \abs{j_1(s)} + 
2e^{ -(v(s, t) - p(s))^2} \abs{\dds{j_1(s)}} \\[5pt]
&\leq 
  6 M^2 Q_1 
  + \alpha_1(s)
     \vpran{ 8M^2Q_1 
                 + 2 \vpran{\frac{3}{4} Q_1 + 2M^2 Q_1 + Q_2}} .
\end{align*}
For $n \geq 2$, by using the bounds for $f_{n+1}$ and $\partial_s j_n$ we have
\begin{align*}
    \abs{\dds{f_{n+1}(s, t)}}
&= \abs{2 \left( \dds{} e^{-(v(s, t) - p(s))^2 } \right) j_n(s) 
     + 2 e^{ -(v(s, t) - p(s))^2 } \dds{} j_n(s)} \\[5pt]
&\leq 
     2 \abs{v(s,t) - p(s)} \abs{ \dds{v(s,t)} - \dot p(s)} f_{n+1}(s,t) 
     + 2 e^{ -(v(s,t) - p(s))^2 } \abs{\dds{} j_n(s)} \\[5pt]
&\leq 
   \alpha_n(s) Q_1\left( 16M^2 + \frac{3}{2} \right) 
   + 2 M^2 Q_1 \alpha_{n-1}(s) 
   + 4 M^2 \int_0^{s} \left| \dds[\tau]{f_n (\tau, s)} \right| \dtau . 
\end{align*}
Using the induction assumption on $f_n$, the last integral term is bounded as
\begin{align*}
   4 M^2 \int_0^{s} \left| \dds[\tau]{f_n (\tau, s)} \right| \dtau
&\leq 
   4M^2 Q_3 3^{n-1} \int_0^{s}  \vpran{\alpha_{n-1}(\tau) + \alpha_{n-2}(\tau)} \dtau 
\\ %[5pt]
&= 2 \cdot 3^{n-1} Q_3 \vpran{\alpha_{n-1}(s) + \alpha_n(s)} .
\end{align*}
Hence, if we choose 
\begin{align*}
   Q_3 
= Q_1\vpran{16M^2 + \frac{3}{2} + 2 M^2 Q_1} + 2 Q_2 ,
\end{align*}
then for $n \geq 2$, we have
\begin{align*}
     \abs{\dds{f_{n+1}(s, t)}}
\leq 
    Q_3  \vpran{\alpha_{n-1}(s) + \alpha_n(s)}
    + 2 \cdot 3^{n-1} Q_3 \vpran{\alpha_{n-1}(s) + \alpha_n(s)}
%\\
= 3^n Q_3 \vpran{\alpha_{n-1}(s) + \alpha_n(s)} ,
\end{align*}
which finishes the induction proof. Since $Q_3 > Q_2$, the bound above holds for $n=0$ as well. 

The Lipschitz estimate~\eqref{bnd:frec_s} follows by summing the bounds \eqref{bound:f-n-deriv-s-simp}:
\begin{gather*}
\left| \dds{\frec(s, t)} \right| \leq \sum_{n=0}^\8 \left| \dds{f_{n+1}(s, t)} \right| 
\leq \sum_{n=0}^\8 3^n Q_3 \vpran{\alpha_{n-1}(s) + \alpha_n(s)} = 4Q_3e^{6M^2 s}. \qedhere
\end{gather*}
\end{proof}

\vspace{1mm}
%\newpage
\section{Proof of Uniqueness} \label{sec:uniq}
In this section we prove the uniqueness theorem. An essential preliminary result is a Lipschitz bound 
for the density functions corresponding to different disk dynamics. Recall that $\|\cdot\|$ denotes the $L^\infty$-norm unless
otherwise specified. We begin by showing that modified average velocity satisfies a Lipschitz bound

\begin{lem} \label{le:Lip_p} 
Let $p$ and $q$ be two Lipschitz velocity profiles and \vmp and \vmq be their associated modified velocities. Then
\vspace{-5mm}
\begin{align*}
\abs{\vmp(s, t) - \vmq(s, t)} \leq \norm{p - q} \qquad  \text{for all $s, t$.}
\end{align*}
\end{lem}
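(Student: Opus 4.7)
The plan is to exploit the very simple observation that taking a minimum is a non-expansive operation, combined with the fact that pointwise (in the averaging endpoint $\tau$) the average velocities $\langle p \rangle_{\tau,t}$ and $\langle q \rangle_{\tau,t}$ differ by no more than $\|p-q\|_\infty$.

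First, I would unpack the definition of the modified average velocity in~\eqref{def:vbar}, writing
\begin{align*}
\bar{v}_p(s,t) = \min_{\tau \in [s,t]} \frac{1}{t-\tau}\int_\tau^t p(\sigma)\, d\sigma,
\qquad
\bar{v}_q(s,t) = \min_{\tau \in [s,t]} \frac{1}{t-\tau}\int_\tau^t q(\sigma)\, d\sigma,
\end{align*}
and observe that, for every fixed $\tau \in [s,t)$,
\begin{align*}
\left| \frac{1}{t-\tau}\int_\tau^t p(\sigma)\, d\sigma - \frac{1}{t-\tau}\int_\tau^t q(\sigma)\, d\sigma \right|
\leq \frac{1}{t-\tau} \int_\tau^t |p(\sigma) - q(\sigma)|\, d\sigma
\leq \|p-q\|.
\end{align*}
(The endpoint $\tau = t$ is handled by continuity, since $\bar v_p$ is continuous in $\tau$ by Lemma~\ref{le:vmprops}\ref{le:cont}.)

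Next, I would invoke the elementary lemma that if two continuous functions $F$ and $G$ on $[s,t]$ satisfy $|F(\tau) - G(\tau)| \leq C$ for all $\tau$, then $|\min F - \min G| \leq C$. The proof is one line: $F(\tau) \leq G(\tau) + C$ for every $\tau$ gives $\min F \leq \min G + C$, and the reverse inequality follows by symmetry. Applying this with $F(\tau) = \langle p \rangle_{\tau,t}$, $G(\tau) = \langle q \rangle_{\tau,t}$, and $C = \|p-q\|$ yields the claimed bound
\begin{align*}
|\bar{v}_p(s,t) - \bar{v}_q(s,t)| \leq \|p-q\|.
\end{align*}

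There is no real obstacle here; the statement is a direct consequence of two facts: (i) averaging in $\sigma$ is an $L^\infty$-contraction, and (ii) minimization in $\tau$ is also an $L^\infty$-contraction. The lemma will be the basic building block for later estimates comparing the densities $f_n^{(p)}$ and $f_n^{(q)}$ associated with two different disk trajectories, which presumably propagate this Lipschitz-in-$p$ control through the recursion~\eqref{eq:rec}.
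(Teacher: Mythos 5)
Your proof is correct and is essentially the same as the paper's: the paper evaluates $\langle p\rangle_{\cdot,t}$ at the minimizer $\tau_2$ of $\langle q\rangle_{\cdot,t}$ and uses the pointwise bound $\langle p\rangle_{\tau_2,t}-\langle q\rangle_{\tau_2,t}\leq \|p-q\|$, which is precisely the one-line proof of your ``min is non-expansive'' lemma. No gap.
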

\Ss
\begin{proof}
For a fixed $s \in [0, t]$ and $t \in [0, T]$ suppose  
\begin{align*}
  \vmp(s, t) = \vint{p}_{\tau_1, t}
\qquad \text{and} \qquad
  \vmq(s, t) = \vint{q}_{\tau_2, t} .
\end{align*}
Without loss of generality, assume that $\vmp(s, t) \geq \vmq(s, t)$. Then
\begin{gather*}
\abs{\vmp(s, t) - \vmq(s, t)}
= \vint{p}_{\tau_1, t} - \vint{q}_{\tau_2, t}
\leq \vint{p}_{\tau_2, t} - \vint{q}_{\tau_2, t}
\leq \norm{p - q} .   \qedhere
\end{gather*}
\end{proof}

%\Ss

\begin{prop} \label{prop:Lip-f-rec}
Let $\big\{ p, \Eta^{(p)}, \{ f_n^{(p)}\}_{n=1}^\8, \frec^{(p)} \big\}$ and 
$\big\{ q, \Eta^{(q)}, \{ f_n^{(q)}\}_{n=1}^\8, \frec^{(q)} \big\}$ be two systems of disk-gas dynamics satisfying Assumptions
\ref{a0}-\ref{a2}. Then there exist a positive constant $Q_6$ that does not depend on $n$ such that the gas densities 
$ \{ f_n^{(p)} \}_{n=1}^\8$ and $\{ f_n^{(q)} \}_{n=1}^\8 $ satisfy the bound
\begin{align}
\abs{f_{n+1}^{(p)}(s, t) - f_{n+1}^{(q)}(s, t)} &\leq  
3^n Q_6 \left( \alpha_n(s) + \alpha_{n-1}(s) \right) \| p - q \| \,, \label{bound:Lip-f-n-p-q}
\qquad  n \geq 0 \,.
\end{align}
Consequently, for all $s \in [0, t]$ and $t \in [0, T]$ we have
\begin{align} \label{cond:Lip-f-rec-p-q}
 \abs{\frec^{(p)}(s, t) - \frec^{(q)}(s, t)} 
&\leq  4 Q_6 e^{6M^2 s} \| p - q \|. 
\end{align}

\end{prop}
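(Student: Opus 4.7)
The plan is to run an induction on $n$ that mirrors the structure of the proof of Proposition~\ref{prop:bound-f-n-s}, but with differences between the two systems playing the role of the difference quotient in $s$. The recursion~\eqref{eq:rec-1} together with the definition~\eqref{def:j-n} already exhibits how $f_{n+1}$ depends on $f_n$, so iterating the estimate is natural; the only genuinely new ingredient needed is Lemma~\ref{le:Lip_p}, which converts differences between the two velocity profiles into differences between modified velocities $\vmp$, $\vmq$ and average velocities $v^{(p)}$, $v^{(q)}$.

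For the base case $n=0$, I would compare the two expressions for $f_1^{(p)}$ and $f_1^{(q)}$ given by~\eqref{def:f-1} term by term. Three contributions appear: (i) the difference of exponentials $e^{-(v^{(p)}-p)^2}-e^{-(v^{(q)}-q)^2}$, controlled by the global Lipschitz constant of $x\mapsto e^{-x^2}$ together with Lemma~\ref{le:Lip_p}; (ii) the difference of upper limits $\vmp[0,s]-\vmq[0,s]$, directly bounded by $\|p-q\|$ via Lemma~\ref{le:Lip_p} and paired with $\|\phi_0\|_{L^\infty}$; and (iii) the difference of integrands $p(s)-q(s)$, paired with the first two moments of $\phi_0$ from Assumption~\ref{a1}. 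Summing these yields $|f_1^{(p)}-f_1^{(q)}|\le Q_6\|p-q\|$ for $Q_6$ chosen large enough.

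For the inductive step, using~\eqref{eq:rec-1} I would decompose
\begin{align*}
f_{n+1}^{(p)}(s,t)-f_{n+1}^{(q)}(s,t)
&= 2\bigl(e^{-(v^{(p)}-p)^2}-e^{-(v^{(q)}-q)^2}\bigr) j_n^{(p)}(s)
+ 2 e^{-(v^{(q)}-q)^2}\bigl(j_n^{(p)}(s)-j_n^{(q)}(s)\bigr).
\end{align*}
The first term uses the $L^\infty$ bound $j_n^{(p)}\le \tfrac12 Q_1\alpha_n(s)$ from Proposition~\ref{bound:f-n-1} and absorbs into the $\alpha_n(s)\|p-q\|$ contribution. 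The second requires splitting $j_n^{(p)}-j_n^{(q)}$ in the same four-piece manner as $j_n(s)-j_n(s')$ was split in the proof of Proposition~\ref{prop:bound-f-n-s}: a term from the difference $\partial_\tau\vmp-\partial_\tau\vmq$, a term from $(p(s)-v^{(p)})-(q(s)-v^{(q)})$, and a term involving $f_n^{(p)}-f_n^{(q)}$ to which the induction hypothesis applies with factor $3^{n-1}Q_6(\alpha_{n-1}+\alpha_{n-2})$. Integrating and collecting the combinatorial factor $3^n$ exactly as before should close the induction.

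The main obstacle, as in Proposition~\ref{prop:bound-f-n-s}, will be the term containing $\partial_\tau\vmp-\partial_\tau\vmq$, since these derivatives are only defined almost everywhere and cannot be compared pointwise through Lemma~\ref{le:Lip_p}. The remedy is the same integration-by-parts trick used to control $I_2$ there: shift the derivative off of $\vmp-\vmq$ and onto the Lipschitz factors $p(s)-v^{(p)}(\tau,s)$ and $f_n^{(p)}(\tau,s)$, using Proposition~\ref{prop:bound-f-n-s} to bound $\partial_\tau f_n^{(p)}$, and then apply Lemma~\ref{le:Lip_p} pointwise to $|\vmp-\vmq|$. The boundary terms from the integration by parts at $\tau=0$ and $\tau=s$ contribute the $\alpha_{n-1}(s)$ piece (supplying the Kronecker-style term at $n=1$), while the interior integrals produce the $\alpha_n(s)$ piece, yielding the claimed bound~\eqref{bound:Lip-f-n-p-q}. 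Finally, summing in $n$ and using $\sum_{n\ge 0}3^n(\alpha_n(s)+\alpha_{n-1}(s))=4e^{6M^2 s}$ gives~\eqref{cond:Lip-f-rec-p-q}.
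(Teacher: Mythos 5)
Your proposal follows essentially the same route as the paper's proof: the same base case with the three contributions from the exponential, the upper limit $\vmp[0,s]$, and the integrand; the same inductive decomposition of $j_n^{(p)}-j_n^{(q)}$ into the $\partial_\tau\vmp-\partial_\tau\vmq$ term (handled by the identical integration-by-parts trick with Lemma~\ref{le:Lip_p} and the bound on $\partial_\tau f_n^{(p)}$ from Proposition~\ref{prop:bound-f-n-s}), the kinematic difference term, and the induction-hypothesis term. The only cosmetic discrepancy is that you announce a ``four-piece'' splitting while correctly listing the three pieces that actually arise here (there is no domain-mismatch term since both flux integrals run over $[0,s]$).
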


\begin{proof}
We show the bounds in~\eqref{bound:Lip-f-n-p-q} by a similar induction proof as for Proposition~\ref{prop:bound-f-n-s}. First, the difference in density fluxes of $\phi_0$ satisfies
\begin{align*}
    \abs{j_0^{(p)}(t) - j_0^{(q)}(t)} 
&= \abs{\int_{-\8}^{ \vmp(0, t) } \left( p(t) - v \right) \phi_0(v) \dv - 
\int_{-\8}^{ \vmq(0, t) } \left( q(t) - v \right) \phi_0(v) \dv} 
\\%[2pt]
&= \abs{\int_{\vmq(0, t)}^{ \vmp(0, s) } \left( p(t) - v \right) \phi_0(v) \dv - 
\int_{-\8}^{ \vmq(0, t) } \big[ \left( p(t) - v \right) - \left( q(t) - v \right) \big] \phi_0(v) \dv} 
\\[3pt]
&\leq \| \vmq - \vmp \| \| \phi_0 \|_{\8} + \| p - q \| \| \phi \|_1 
= \left( \| \phi_0 \|_{\8} + \| \phi_0 \|_1 \right) \| p - q \| \,. 
\end{align*}
Therefore, we have
\begin{align*}
\left| f_1^{(p)}(s, t) - f_1^{(q)}(s, t) \right| &= 
\left| 2e^{ -( v_p(s, t) - p(s) )^2 } j_0^{(p)}(s) - 2e^{ -( v_q(s, t) - q(s) )^2 } j_0^{(q)}(s)\right| \\[3pt]
&\leq 2\left| e^{ -( v_p(s, t) - p(s) )^2 } - e^{ -( v_p(s, t) - p(s) )^2 } \right| \abs{j_0^{(p)}(s)}
+ 2 \left| j_0^{(p)}(s) - j_0^{(q)}(s) \right| \\[4pt]
&\leq Q_1 \| p - q \| + 2\| p - q \|\left( \| \phi_0 \|_{\8} + \| \phi_0 \|_1 \right) 
\\[3pt]
& = \| p - q \| \left( Q_1 + 2\| \phi_0 \|_{\8} + 2\| \phi_0 \|_1 \right) \,.
\end{align*}
Thus by choosing $Q_4 = Q_1 + 2\| \phi_0 \|_{\8} + 2\| \phi_0 \|_1$ we complete the proof for $n=1$. For $n\geq 1$ we have
\begin{align*}
f_{n+1}^{(p)}(s, t) - f_{n+1}^{(q)}(s, t) 
&= 2e^{-( v(s, t) - p(s))^2 }j_n^{(p)}(s) - 2e^{-( v(s, t) - p(s))^2 }j_n^{(p)}(s) \\[3pt]
&\leq 2\left| e^{ -( v_p(s, t) - p(s) )^2 } - e^{ -( v_p(s, t) - p(s) )^2 } \right| j_n^{(p)}(s)
+ 2 \left| j_n^{(p)}(s) - j_n^{(q)}(s) \right| \\[3pt]
&\leq \| p - q \| \alpha_{n}(s) + 2 \left| j_n^{(p)}(s) - j_n^{(q)}(s) \right| . 
\end{align*}
We re-formulate the difference in density fluxes as
\begin{align*}
j_n^{(p)}(t) - j_n^{(q)}(t) &= \int_0^t \dds{\vmp(s, t)} (p(t) - v_p(s, t)) f_n^{(p)}(s, t) \ds
                             - \int_0^t \dds{\vmq(s, t)} (q(t) - v_q(s, t)) f_n^{(q)}(s, t) \ds \\[3pt]
&= \int_0^t \left[ \dds{\vmp(s, t)} - \dds{\vmq(s, t)} \right] (p(t) - v_p(s, t)) f_n^{(p)}(s, t) \ds \\[3pt]
&\quad \, - \int_0^t \dds{\vmq(s, t)} \Big[ (p(t) - v_p(s, t)) - (q(t) - v_q(s, t)) \Big] f_n^{(p)}(s, t) \ds \\[3pt]  
&\quad \, - \int_0^t \dds{\vmq(s, t)} ( q(t) - v_q(s, t) ) \Big[f_n^{(p)}(s, t) - f_n^{(q)}(s, t) \Big] \ds %\\[5pt] 
=: J_1 + J_2 + J_3.
\end{align*}
By integration by parts and Proposition \ref{prop:bound-f-n-s}, we write $J_1$ as
\begin{align}
J_1 &= \int_0^t \dds{} \Big[ \vmp(s, t) - \vmq(s, t) \Big] (p(t) - v_p(s, t)) f_n^{(p)}(s, t) \ds \label{bnd:J1}\\[3pt]
&= \left[ \Big( \vmp(s, t) - \vmq(s, t) \Big) (p(t) - v_p(s, t)) f_n^{(p)}(s, t) \right]_{s=0}^{s=t} \nn \\[3pt]
&\quad \, + \int_0^t \Big[ \vmp(s, t) - \vmq(s, t) \Big] \dds{v_p(s, t)} f_n^{(p)}(s, t) \ds \nn \\[3pt]
&\quad \, - \int_0^t \Big[ \vmp(s, t) - \vmq(s, t) \Big] (p(t) - v_p(s, t)) \dds{ f_n^{(p)}(s, t) } \ds %\\[10pt]
=: J_1^1 + J_1^2 - J_1^3. \nn
\end{align}
The boundary terms $J_1^1$ are only nonzero for $n=1$, so we write
\begin{align*}
\abs{J_1^1} 
&= \abs{\left[ \Big( \vmp(s, t) - \vmq(s, t) \Big) (p(t) - v_p(s, t)) f_n^{(p)}(s, t) \right]_{s=0}^{s=t}} 
\\[5pt]
&= \abs{\Big( \vmq(0, t) - \vmp(0, t) \Big) (p(t) - v_p(0, t)) f_n^{(p)}(0, t)} 
\leq 
\| p - q \|2MQ_1 \delta_{1n}.
\end{align*}
By Lemma~\ref{le:vmprops}, the second term $J_1^2$ satisfies
\begin{align*}
\abs{J_1^2} &= \abs{\int_0^t \Big[ \vmp(s, t) - \vmq(s, t) \Big] \dds{v_p(s, t)} f_n^{(p)}(s, t) \ds} %\\[5pt]
\leq \| p - q \| \frac{M}{2} \int_0^t Q_1\alpha_{n-1}(s) \ds = \| p - q \| \frac{Q_1}{4M} \alpha_n (t).
\end{align*}
Similarly, the third term $J_1^3$ is bounded as
\begin{align*}
\abs{J_1^3} 
&= \abs{\int_0^t \Big[ \vmp(s, t) - \vmq(s, t) \Big] (p(t) - v_p(s, t)) \dds{ f_n^{(p)}(s, t) } \ds} \\[5pt]
&\leq 2M\| p - q \| \int_0^t 3^{n-1} Q_3 \,  \vpran{\alpha_{n-1}(s) + \alpha_{n-2}(s)} \ds \\[5pt]
&= \| p - q \|\frac{ 3^{n-1} Q_3}{M} \vpran{\alpha_n (t) + \alpha_{n-1}(t)}.
\end{align*}
Combining estimates for $J_1^1$, $J_1^2$ and $J_1^3$ we get
\begin{align*}
\abs{J_1} \leq \frac{\| p-q \|}{M}
\left( \frac{Q_1}{4} \alpha_n (t) + 3^{n-1} Q_3 \vpran{\alpha_n (t) + \alpha_{n-1}(t)} + 2M^2Q_1 \delta_{1n} \right).
\end{align*}
The second term $J_{2}$ is bounded as
\begin{align*}
\abs{J_2} 
&= \abs{\int_0^t \dds{\vmq(s, t)} \Big[ (p(t) - v_p(, t)) - (q(t) - v_q(s, t)) \Big] f_n^{(p)}(s, t) \ds} \\[5pt] 
&\leq \| p - q \| M \int_0^t Q_1 \alpha_{n-1}(s) \ds =  \| p - q \| \frac{Q_1}{2M} \alpha_{n}(t).
\end{align*}
Using the induction assumption, we derive the bound for $J_3$ as
\begin{align*}
\abs{J_3} &= \abs{\int_0^t \dds{\vmq(s, t)} ( q(t) - v_q(s, t) ) \Big[f_n^{(p)}(s, t) - f_n^{(q)}(s, t) \Big] \ds} \\[5pt] 
&\leq \| p - q \| M^2 \int_0^t 3^{n-1} Q_5 \left( \alpha_{n-1}(s) + \alpha_{n-2}(s) \right) \ds %\\[5pt]
\\[2pt]
&= \| p - q \| \frac{3^{n-1} Q_5}{2}\left( \alpha_n(t) + \alpha_{n-1}(t) \right) \,.
\end{align*}
Let $Q_5 = \frac{3Q_1}{2M} + 1 + \frac{2Q_3}{M} + 4MQ_1$. Then combining the estimates on $J_1$, $J_2$ and $J_3$ gives
\begin{align*}
\left| f_{n+1}^{(p)}(s, t) - f_{n+1}^{(q)}(s, t) \right| 
&\leq \| p - q \| \alpha_{n}(s) + 2 \left( J_1 + J_2 + J_3 \right) \\[5pt]
&\leq \| p - q \| \left( 2 \cdot 3^{n-1}Q_5 \big( \alpha_n(s) + \alpha_{n-1}(s) \big) + Q_5 \delta_{1n}\right);
\end{align*}
for $n=1$ it becomes
\begin{align*}
\left| f_{2}^{(p)}(s, t) - f_{2}^{(q)}(s, t) \right| 
&\leq \| p - q \| 
  \left(  2Q_5 \big( \alpha_1(s) + \alpha_0(s) \big) + Q_8 \right) 
\leq 
  3\| p - q \|Q_5 \big( \alpha_1(s) + \alpha_0(s) \big),
\intertext{while for $n \geq 2$ we get}
\left| f_{n+1}^{(p)}(s, t) - f_{n+1}^{(q)}(s, t) \right| &\leq \| p - q \| 
\left(  2 \cdot 3^{n-1}Q_5 \big( \alpha_n(s) + \alpha_{n-1}(s) \big) \right)
\leq \| p - q \| 3^nQ_5 \big( \alpha_n(s) + \alpha_{n-1}(s) \big) .
\end{align*}
Choosing $Q_6 = \max\{ Q_4, Q_5\}$ unifies all cases. 
Estimate~\eqref{cond:Lip-f-rec-p-q} follows by summing the bounds in~\eqref{bound:Lip-f-n-p-q}:
\begin{align*}
\abs{\frec^{(p)}(t) - \frec^{(q)}(t)} &\leq \sum_{n=1}^\8 \abs{f_n^{(p)}(t) - f_n^{(q)}(t)}
\\
&\leq \vpran{Q_6\sum_{n = 0}^\8 3^n \vpran{\alpha_n(s) + \alpha_{n-1}(s)}} \norm{p - q}
\\
&= 4 Q_6 e^{6M^2 s} \norm{p - q} .  \qedhere
\end{align*}  
\end{proof}

The Lipschitz property of the drag force is an immediate consequence of Proposition~\ref{prop:Lip-f-rec}:
\begin{prop} \label{prop:Lip-G}
Given two disk velocity profiles $p$ and $q$, let $G_p$ and $G_q$ be the corresponding drag forces defined in~\eqref{def:drag}. 
Then for any $T > 0$ there exists a constant $L_T$ such that
\begin{align*}
\norm{G_p - G_q} \leq L_T \norm{p - q} . 
\end{align*}
\end{prop}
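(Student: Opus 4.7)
The plan is to exploit the decomposition $G_p(t) = G_{p, 0}(t) + G_{p, \text{rec}}(t)$ derived at the end of Section~\ref{sec:Assump} and bound each piece separately by a constant multiple of $\|p - q\|$ uniformly on $[0, T]$. The initial-data contribution is essentially immediate from~\eqref{def:G-p-0}: adding and subtracting to compare the integrands and the integration limits $\vmp(0, t)$ versus $\vmq(0, t)$ produces a direct $\|p - q\|$ bound, using Lemma~\ref{le:Lip_p} (so that $|\vmp(0, t) - \vmq(0, t)| \leq \|p - q\|$), Assumption~\ref{a1} (which yields $\phi_0 \in L^1 \cap L^\infty$), and Lemma~\ref{lem:technical}\ref{prop:vst_Linf} (which bounds the remaining factors).

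For the recollision contribution I would use the representation~\eqref{eq:vm_once} and split
\begin{align*}
G_{p, \text{rec}}(t) - G_{q, \text{rec}}(t) = I_{\text{avg}} + I_{\text{ker}} + I_{\text{dens}},
\end{align*}
where $I_{\text{avg}}$ isolates the factor $\partial_s \vmp - \partial_s \vmq$, $I_{\text{ker}}$ isolates the kernel difference with $K(a, b) := (a - b)^2 + \tfrac{\sqrt{\pi}}{2}(a - b)$, and $I_{\text{dens}}$ isolates $\frec^{(p)} - \frec^{(q)}$. The last term is immediate from the estimate~\eqref{cond:Lip-f-rec-p-q} of Proposition~\ref{prop:Lip-f-rec}. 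For $I_{\text{ker}}$, a direct expansion, together with the observations that $|p(t) - q(t)| \leq \|p - q\|$ and that $|v_p(s, t) - v_q(s, t)| \leq \|p - q\|$ because $v_p$ and $v_q$ are time-averages of $p$ and $q$, combined with the pointwise bounds from Lemma~\ref{lem:technical} and Proposition~\ref{bound:f-n-1}, gives a $\|p - q\|$ estimate as well.

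The main obstacle is $I_{\text{avg}}$, since $\partial_s \vmp - \partial_s \vmq$ cannot be controlled pointwise by $\|p - q\|$. I would handle it exactly as the $J_1$ term in the proof of Proposition~\ref{prop:Lip-f-rec}: rewrite the integrand as $\partial_s(\vmp - \vmq)$ times a Lipschitz remainder and integrate by parts in $s$. The integration by parts is legitimate because $\vmp, \vmq$ are Lipschitz in $s$ by Lemma~\ref{le:vmprops}\ref{le:Lip_s}, $v_p(\cdot, t)$ is Lipschitz by Lemma~\ref{lem:technical}, and, crucially, $\frec^{(p)}(\cdot, t)$ is Lipschitz in $s$ by Proposition~\ref{prop:bound-f-n-s}. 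The boundary term at $s = t$ vanishes because $v_p(t, t) = p(t)$ forces $K(p(t), v_p(t, t)) = 0$; the boundary term at $s = 0$ is controlled by $|\vmp(0, t) - \vmq(0, t)| \leq \|p - q\|$ together with the pointwise bound on $\frec^{(p)}(0, t)$ from Proposition~\ref{bound:f-n-1}; and the interior integral, which contains $\partial_s v_p$ and $\partial_s \frec^{(p)}$, is bounded by $\|p - q\|$ times a constant depending only on the uniform bounds from Lemma~\ref{lem:technical} and Propositions~\ref{bound:f-n-1}--\ref{prop:bound-f-n-s}. Summing the three pieces, adding the initial-data estimate, and taking the supremum over $t \in [0, T]$ yields $\|G_p - G_q\| \leq L_T \|p - q\|$, with the $T$-dependence of $L_T$ inherited from the exponential factors $e^{2 M^2 t}$ and $e^{6 M^2 t}$ appearing in the $L^\infty$ and Lipschitz-in-$s$ bounds on $\frec$.
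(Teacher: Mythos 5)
Your proposal is correct and follows essentially the same route as the paper's proof: the same decomposition into the initial-data part and a three-term splitting of the recollision difference (your $I_{\text{avg}}, I_{\text{ker}}, I_{\text{dens}}$ are the paper's $K_1, K_2, K_3$), with the $\partial_s \vmp - \partial_s \vmq$ term handled by the same integration-by-parts device as $J_1$ in Proposition~\ref{prop:Lip-f-rec} and the density term handled by~\eqref{cond:Lip-f-rec-p-q}. Your write-up is in fact more explicit than the paper's sketch, correctly identifying why the boundary term at $s = t$ vanishes and how the one at $s = 0$ is controlled.
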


\begin{proof}
Recall that we decompose $G_p = G_{p, 0} + G_{p, rec}$ and $G_q = G_{q, 0} + G_{q, rec}$. The Lipschitz bounds for 
$\abs{G_{p, 0} - G_{q, 0}}$ and $\abs{F(\eta_p(t), t) - F(\eta_q(t), t)}$ can be derived by direct estimates, so we focus on 
the re-collision part. We only give a sketch of the proof since it is very similar (and at times easier) to the one
for~\eqref{bound:Lip-f-n-p-q}. To simplify the notation we let \vspace{-5mm}
\begin{align*}
A_p(s, t) = (p(t)-v_p(s, t))^2 + \frac{\sqrt{\pi}}{2} (p(t) - v_p(s, t)).
\end{align*}
Then the difference becomes
\begin{align*}
G_{p, \text{rec}}(t) - G_{q, \text{rec}}(t) 
&= \int_0^t \dds{\vmp(s, t)} 
      A_p(s, t) \, \frec^{(p)}(s, t) \ds 
      -\int_0^t \dds{\vmq(s, t)} A_q(s, t) \frec^{(q)}(s, t) \ds 
\\[2pt]
& = \int_0^t \vpran{\dds{\vmp(s, t)} - \dds{\vmq(s, t)}} A_p(s, t)\frec^{(p)}(s, t) \ds
\\[2pt]
& \quad \, 
  + \int_0^t \dds{\vmq(s, t)} \vpran{A_p(s, t) - A_q(s, t)}\frec^{(p)}(s, t) \ds
\\[2pt]
& \quad \,
  + \int_0^t \dds{\vmq(s, t)} A_q(s, t)
      \vpran{\frec^{(p)}(s, t) - \frec^{(q)(s, t)}} \ds
= : K_1 + K_2 + K_3 \,.
\end{align*}
A Lipschitz bound for $K_1$ follows from the same calculation as \eqref{bnd:J1} in the proof of proposition 
\ref{prop:Lip-f-rec}. A Lipschitz bound for $K_2$ follows from the definition of $v_p$, and a Lipschitz bound for $K_3$ 
follows from~\eqref{cond:Lip-f-rec-p-q} together with the bounds for 
$\del_s \vmq$ in Lemma~\ref{le:vmprops}.  The combination of the three bounds for $K_1, K_2, K_3$ gives the Lipschitz bound for
$G_{\text{rec}}$.
\end{proof}

\begin{rmk}
Strictly speaking, the drag force in Proposition~\ref{prop:Lip-G} is only the contribution from the right-side of the disk.
However, as mentioned earlier, a similar Lipschitz property holds for the full drag force defined in~\eqref{def:drag-full},
since the estimates for the left side follow from a similar argument. The main modification needed for the left side is 
to change the definition of the modified average velocity $\vm$ into 
\begin{align*}
   \bar v(s, t) = \max_{\tau \in [s, t]} \vint{p}_{s, t} .
\end{align*}
Since replacing minimum with maximum does not affect the properties of the modified average velocity in Lemma~\ref{le:vmprops},
the rest of the estimates remain the same. 
\end{rmk}

%\newpage
We now have all the ingredients to prove the main result of this paper:
\begin{proof}[Proof of Theorem~\ref{thm:main}]
For any $T > 0$, by Proposition~\ref{prop:Lip-G} and the assumption that the external force $F(\cdot, t)$ is Lipschitz, we have
\vspace{-5mm}
\begin{align*}
\| p - q \|_{L^\infty(0, t)} \leq \vpran{L_T + \text{Lip}(F)} \int_0^T \| p - q \|_{L^\infty(0, t)} \dt ,
\end{align*}
which gives $p = q$ on $[0, T]$ by Gronwall's inequality. Meanwhile, for a given $p$, the density function
$f$ on the disk can be written explicitly using the decomposition established in Section \ref{sec:gas_decomp}: 
\begin{align*}
   f_R(\eta(t), v, t) = \sum_{n=0}^\infty f_{R, n}(\eta(t), v, t) ,
\qquad
   f_L(\eta(t), v, t) = \sum_{n=0}^\infty f_{L, n}(\eta(t), v, t),
\end{align*}
together with the initial condition $f(x, v, 0) = \phi_0(v)$. Therefore, the boundary conditions in~\eqref{BC:R}-\eqref{BC:L} are uniquely defined, which combined with the free transport equation~\eqref{eq:gas} gives a unique solution for $f$. We thus obtain a unique solution to the full gas-disk system. 
%so $f^{(p)} = f^{(q)}$ if $p=q$ and $f^{(p)}(x, v, 0) = f^{(q)}(x, v, 0) = \phi_0(v)$, .
\end{proof}

%\blue{Conclusion?..}

\Ss
\noindent {\bf Acknowledgements:}
The authors want to thank Ralf Wittenberg for fruitful discussions on this problem and pointing out a mistake in an earlier version. The research of W.S. is supported by NSERC Discovery Individual Grant R611626.

%%%%%%%%%%%%%%%%%%%%%%%%%%%%%%%%%%%%%%

%\vspace{15mm}

\bibliographystyle{dinat}
\bibliography{../Global/GasDisk}
%%%%%%%%%%%%%%%%%%%%%%%%%%%%%%%%%%%%%%

%\newpage

\end{document}